\theoremstyle{plain}
\newtheorem{thm}{Theorem}[section]
\newtheorem*{thm*}{Theorem}
\newtheorem*{cor*}{Corollary}
\newtheorem*{defn*}{Definition}
\newtheorem{prop}[thm]{Proposition}
\newtheorem{lem}[thm]{Lemma}
\newtheorem{claim}[thm]{Claim}
\newtheorem*{claim*}{Claim}
\theoremstyle{definition}
\newtheorem{rem}[thm]{Remark}
\newtheorem{nota}[thm]{Notation}
\theoremstyle{remark}
\def\pr{\mathrm{pr}}
\begin{document}
\title[]{On cylindrical smooth rational Fano fourfolds }

\author[N.T.A. Hang]{Nguyen Thi Anh Hang}
\address{The Department of Mathematics, Thai Nguyen University of education.
20 Luong Ngoc Quyen Street, Thai Nguyen City, Thai Nguyen Province, Viet Nam.}
\email{hangnthianh@gmail.com}

\author[M. Hoff]{Michael Hoff}
\address{	Mathematik und Informatik, Universit\"{a}t des Saarlandes, Campus E2 4,
	D-66123 Saarbr\"{u}cken, Germany}
\email{hahn@math.uni-sb.de}

\author[H.L. Truong]{Hoang Le Truong}
\address{	Mathematik und Informatik, Universit\"{a}t des Saarlandes, Campus E2 4,
	D-66123 Saarbr\"{u}cken, Germany}
\address{Institute of Mathematics, VAST, 18 Hoang Quoc Viet Road, 10307
Hanoi, Viet Nam}
\address{Thang Long Institute of Mathematics and Applied Sciences, Hanoi, Vietnam}
\email{hoang@math.uni-sb.de\\hltruong@math.ac.vn\\
	truonghoangle@gmail.com}

\thanks{2010 {\em Mathematics Subject Classification\/}: 14J45, 14E08, 14R05.\\
M.H. was partially supported by the Deutsche Forschungsgemeinschaft (DFG, German Research Foundation) - Project-ID 286237555 - TRR 195. 
H.L.T.  was partially supported by the Alexander von Humboldt Foundation and the Vietnam National Foundation for Science and Technology Development (NAFOSTED) under grant number 101.04-2019.309. }
\keywords{Fano variety, Cylinders}

\begin{abstract} 
We construct new families of smooth Fano fourfolds with Picard rank $1$ which contain open $\Bbb A^1$-cylinders, that is, Zariski open subsets of the form $Z \times \Bbb A^1$, where $Z$ is a quasiprojective variety. In particular, we show that every Mukai fourfold of genus $8$ is cylindrical and there exists a family of cylindrical Gushel-Mukai fourfolds. 
\end{abstract}

\maketitle

\section{introduction}

A smooth complex projective variety $X$ 
 is called {\it cylindrical} if it contains a cylinder, that is, a principal Zariski open subset $U$ isomorphic to a product $Z \times \Bbb A^1$, where $Z$ is a variety and $\Bbb A^1$ is the affine line over $\Bbb C$. Complex projective varieties containing cylinders have recently started to receive a lot of attention in connection with the study of unipotent group actions on affine varieties.   

We shortly summarize what is known for low dimensional varieties. There are only two cylindrical smooth complex curves, namely the affine line $\Bbb A^1$ and the projective line $\Bbb P^1$. A smooth del Pezzo surface of degree $d$ has an anticanonically polarized cylinder  if and only if $d \ge 4$ (see  \cite{KPZ11, KPZ14} and \cite{CPW16}). In higher dimensions, several families of  Fano varieties of dimension $3$ and $4$ and Picard number one admitting (anti-canonically polar) cylinders have been constructed in \cite{KPZ14,PrZ16,PrZ17}, but a complete classification is still far from being known. 

In this paper, we focus on the case where the variety $X$ is a \emph{Fano fourfold}, that is, a four dimensional complex smooth Fano variety with ample anticanonical divisor $-K_X$.  
Given a smooth Fano fourfold $X$ with Picard rank $1$, the \emph{index} of $X$ is the integer $r$ such that $-K_X \sim rH$, where $H$ is the ample divisor generating the Picard group ${\mathrm{Pic}}(X) = \Bbb Z H$. Its \emph{degree} $d = \deg X$ is defined with respect to this ample divisor $H$. It is known that the index is within the range $1 \le r \le 5$. Moreover, if $r =5$, then $X \cong \Bbb P^4$, and if $r =4$, then $X$ is a quadric in $\Bbb P^5$. Smooth Fano fourfolds of index $r = 3$ are called del Pezzo fourfolds; their degrees vary in the range $1 \le d \le 5$. Smooth Fano fourfolds of index $r = 2$ are called \emph{Mukai fourfolds}; their degrees are even and can be written as $d = 2g - 2$, where $g$ is called the \emph{genus} of $X$ (see also Section \ref{notationAndFamilies}) and the possible values for the genus are $2\le g \le 10$. There is no classification known for Fano fourfolds of index $r=1$.

For a recent and elaborated overview of cylindrical varieties, we refer to \cite{CPPZ20}. Note that by \cite[Corollary 1.6]{CPPZ20}, any smooth cylindrical Fano fourfold of Picard rank $1$ is rational, but only a few examples are known. 
According to  \cite[Theorem 1.1]{PrZ16}, a smooth intersection of two quadrics $W_{2,2}$ in $\Bbb P^6$ is a cylindrical del Pezzo fourfold of degree $4$. In addition, a smooth del Pezzo fourfold $W_5 \subset \Bbb P^7$ of degree $5$ is also cylindrical. Starting with smooth intersections of two quadrics $W_{2,2}$ or del Pezzo quintic fourfolds $W_5$, and performing suitable Sarkisov links, Prokhorov and Zaidenberg construct in \cite{PrZ16,PrZ17} four families of cylindrical Mukai fourfolds of genus $g=7, 8, 9$ and $10$, respectively. In \cite{PrZ20}, the authors show that every Mukai fourfold of genus $10$ is cylindrical. 

We proceed in a similar fashion as in \cite{PrZ16,PrZ17} and connect Mukai fourfolds to the projective space $\Bbb P^4$ via Sarkisov links. By a careful analysis of these Sarkisov links, we provide further examples of cylindrical, smooth, rational Fano fourfolds. Note that cylindrical Mukai fourfolds of genus $6$, so-called \emph{Gushel-Mukai} fourfolds, were not known before.

\begin{thm}\label{Main}
The following four families parametrize cylindrical, smooth, rational Fano fourfolds:

\begin{enumerate}[$i)$]
\item the Gushel-Mukai fourfolds of genus $6$ containing a $\tau$-quadric surface, 
\item the Mukai fourfolds of genus $7$ containing a cubic scroll surface,
\item the Mukai fourfolds of genus $8$,
\item the Mukai fourfolds of genus $9$  containing a del Pezzo surface of degree $6$. 
\end{enumerate}
\end{thm}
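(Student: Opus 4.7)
The plan is to prove each of the four cases by the Sarkisov link method pioneered in \cite{PrZ16,PrZ17}. The general strategy is: for each family, construct a birational map from a Fano fourfold $X$ in the family to a simpler rational variety $Y$ that is already known to be cylindrical, and transfer a cylinder from $Y$ back to $X$ through this map. For cases $i)$, $ii)$, and $iv)$, the natural target $Y$ is either the del Pezzo quintic fourfold $W_5 \subset \Bbb P^7$ or the smooth intersection of two quadrics $W_{2,2} \subset \Bbb P^6$, both cylindrical by \cite{PrZ16,PrZ17}. For case $iii)$ the target can simply be $\Bbb P^4$.

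Concretely, in each case I would start with the distinguished surface $S \subset X$ singled out in the hypothesis (a $\tau$-quadric for $i)$, a cubic scroll for $ii)$, a del Pezzo sextic for $iv)$; for $iii)$ one must first exhibit a surface that is always contained in a genus $8$ Mukai fourfold via its embedding $X \subset \mathrm{Gr}(2,6)$). Next, I would show that the blow-up $\sigma \colon \widetilde X = \mathrm{Bl}_S X \to X$ fits into a Sarkisov diagram
\[
\xymatrix{
& \widetilde X \ar@{-->}[rr] \ar[dl]_{\sigma} & & \widetilde Y \ar[dr]^{\tau} & \\
X \ar@{-->}[rrrr] & & & & Y
}
\]
where the top dashed arrow is either an isomorphism or a composition of flops along curves coming from the secant geometry of $S$, and $\tau$ is the divisorial contraction arising from the linear system $|H - S|$ of hyperplane sections of $X$ containing $S$. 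The specific geometric content of each link --- in particular the identity of the contracted divisor and the loci of the small modification --- has to be worked out case by case, in the spirit of the analogous computations in \cite{PrZ16,PrZ17}. A careful analysis of the normal bundle of $S$ in $X$, the base locus of $|H-S|$, and the resulting resolution of the projection $X \dashrightarrow Y$ is the central technical ingredient.

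Once the link is in place, the cylinder transfer proceeds as follows. Since each target $Y$ admits a rich family of cylinders, one may select an open $U \cong Z \times \Bbb A^1$ in $Y$ disjoint from the image of the exceptional locus of $\tau$. Then $U$ pulls back isomorphically to $\widetilde Y$, passes through the small modification to an open of $\widetilde X$, and descends via $\sigma$ to an open cylinder in $X$. The hardest step will be the geometric analysis of the link, and in particular case $iii)$: to show that \emph{every} Mukai fourfold of genus $8$ is cylindrical (with no surface hypothesis), one must demonstrate that the required distinguished surface $S$ exists inside every such $X$ and that the Sarkisov link is constructed uniformly over the moduli. Verifying that the intermediate small modification is indeed a genuine flop between smooth fourfolds, rather than degenerating into something more singular on certain loci, is expected to be the most delicate computation throughout.
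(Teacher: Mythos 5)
Your overall strategy (Sarkisov links plus cylinder transfer) is the right one, and for case $iii)$ you correctly identify that the crux is exhibiting a distinguished surface in \emph{every} genus $8$ fourfold (the paper does this with the quintic del Pezzo surface in the class $\sigma_{1,1}|_{X_{14}}$ coming from ${\mathrm{Gr}}(2,6)$). But there is a genuine gap in your transfer step, and it is precisely where the main technical content of the paper lives. When you run the link defined by $|H-S|$, the open subset of $X$ that becomes isomorphic to an open subset of the target is \emph{not} the complement of the exceptional locus of $\tau$: it is the complement of a cubic hypersurface section of $X$, and on the target side ($\Bbb P^4$, uniformly, in all four cases --- see Proposition \ref{pro4}$(vii)$) it is the complement of a singular cubic threefold $W_3=\rho(D)$, the image of the divisor contracted \emph{towards} $X$. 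So you cannot ``select a cylinder disjoint from the image of the exceptional locus of $\tau$''; you must produce a cylinder inside $\Bbb P^4\setminus W_3$ for a specific nodal cubic threefold $W_3$. This is far from automatic: for a \emph{smooth} cubic threefold it is not known whether the complement is cylindrical, so the singularities of $W_3$ (it has $7$, $7$, $10$, $9$ ordinary double points in the four cases) are essential. The paper handles this with two dedicated statements: if the singular locus of a cubic contains a line, the complement admits an $\Bbb A^1$-fibration and hence a cylinder (Proposition \ref{cylinderSC}); and an ordinary double point can be reduced to that situation by a cubic Cremona transformation centered at the node (Proposition \ref{cylinderCremona}). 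Nothing in your proposal supplies this step, and without it the proof does not close.

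Two further, more minor points. First, your choice of targets $W_5$ or $W_{2,2}$ for cases $i)$, $ii)$, $iv)$ is not what the numerology of $|H-S|$ produces for these particular surfaces: the degree and genus data in Table \ref{S95} force the target to be $\Bbb P^4$ in all four cases ($(\varphi^*L-D)^4=1$ and $\dim|\varphi^*L-D|=4$), and it is unclear that links to $W_5$ or $W_{2,2}$ exist for these families. Second, the small modification you anticipate does not occur here: the link is a single smooth blowup $\widetilde X$ carrying two divisorial Mori contractions, with no intermediate flop, so the delicacy you expect there is absent --- the real delicacy is the cylinder in the complement of the singular cubic threefold.
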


\begin{rem}
 \begin{enumerate}[a)]
  
   \item The codimension of the families $i),ii)$ of Theorem \ref{Main} is one (see Section \ref{tau} also for the definition of a $\tau$-quadric surface and Section \ref{geometryCubicThreefold}.) 
    \item The family $iv)$ of Theorem \ref{Main} was already studied in \cite[Theorem 2.1 and Corollary 2.4]{PrZ17}. Indeed, a general Mukai fourfold of genus $9$ containing a sextic del Pezzo surface also contains a smooth quadric surface (see also end of Section \ref{geometryCubicThreefold}). 
    \item 
In \cite[Theorem 1.1]{PrZ16}, the authors showed the existence of cylindrical Mukai fourfolds of genus $7$ and $8$ varying in a family inside their moduli spaces of codimension $2$ and $1$,  respectively. 
Therefore, our families of cylindrical Mukai fourfolds of genus $7$ and $8$ described in Theorem \ref{Main} are new. 
 \end{enumerate}
\end{rem}

In Section \ref{notationAndFamilies}, we give an overview of the basic results and describe some families appearing in Theorem \ref{Main} in more detail. Section \ref{SarkisovLinks} is devoted to the geometry of the Sarkisov links. We show that the complement of a cubic hypersurface section of a Mukai fourfold as in Theorem \ref{Main} is isomorphic to the complement of a cubic threefold $W_3\subset \Bbb P^4$. In Section \ref{geometryCubicThreefold}, we describe the geometry of the cubic threefolds $W_3$ and their singularities depending on the genus $g$ of the Mukai fourfolds. The existence of a cylinder in the complement of cubic threefolds $W_3$ with isolated singularities and the proof of Theorem \ref{Main} are presented in Section \ref{CylinderCubics}.

\section{Notation and preliminaries}\label{notationAndFamilies}

\begin{nota}
We work over the complex numbers $\Bbb C$. 
Let $Y\subset\Bbb P^N$ be a smooth projective variety of dimension $n$ and
	\begin{enumerate}[-]

\item $H_Y$, its hyperplane class,
\item $d(Y)=H^n_Y$, its degree,
\item $K_Y$, its canonical class,
\item $\pi(Y)=\frac{1}{2}H_Y^{n-1}((n-1)H_Y+K_Y)+1$, its sectional genus,
\item $\mathcal{T}_Y$ the tangent bundle of $Y$,
\item $c_i(\mathcal E)$ the $i$-th Chern class of a vector bundle $\mathcal E$ on $Y$, and $c_i(Y) = c_i(\mathcal T_Y)$,
\item $\chi_Y = \chi(\mathcal O_Y)$, its Euler-Poincar\'{e} characteristic,
\item $\langle Y \rangle$, the linear span of $Y$. 
\item $X_{2g-2}$, a Mukai fourfold of index $2$ and genus $g$. 
	\end{enumerate}

\end{nota}

The following lemma is a special case of the Riemann-Roch Theorem.

\begin{lem}{\cite[Lemma 2.1]{PrZ16}} For a smooth projective fourfold $X$ and a divisor $D$ on $X$ we have
$$\chi(\mathcal O_X(D))=\frac{1}{24}(D^4+2D^3\cdot c_1(X)+D^2(c_1^2(X)+c_2(X))+D\cdot  c_1(X)\cdot c_2(X))+\chi(\mathcal O_X).$$
\end{lem}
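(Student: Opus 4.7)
The plan is a direct application of the Hirzebruch--Riemann--Roch theorem, which states that for any coherent sheaf $\mathcal F$ on a smooth projective variety,
$$\chi(\mathcal F) = \int_X \operatorname{ch}(\mathcal F) \cdot \operatorname{td}(\mathcal T_X).$$
Specializing to $\mathcal F = \mathcal O_X(D)$, I would use that the Chern character of a line bundle is the formal exponential $\operatorname{ch}(\mathcal O_X(D)) = e^D = 1 + D + \tfrac{1}{2}D^2 + \tfrac{1}{6}D^3 + \tfrac{1}{24}D^4$ (higher terms vanishing for dimension reasons on a fourfold).

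Next, I would expand the Todd class of the tangent bundle in terms of $c_i = c_i(X)$, using the standard formulas
$$\operatorname{td}(\mathcal T_X) = 1 + \tfrac{1}{2}c_1 + \tfrac{1}{12}(c_1^2+c_2) + \tfrac{1}{24}c_1c_2 + \operatorname{td}_4(X),$$
where $\operatorname{td}_4(X)$ is the degree-$4$ component depending on $c_1,\ldots,c_4$ (its explicit shape will not be needed). Then I would read off the degree-$4$ part of the product $e^D \cdot \operatorname{td}(\mathcal T_X)$:
$$\tfrac{1}{24}D^4 + \tfrac{1}{6}D^3 \cdot \tfrac{1}{2}c_1 + \tfrac{1}{2}D^2 \cdot \tfrac{1}{12}(c_1^2+c_2) + D \cdot \tfrac{1}{24}c_1c_2 + \operatorname{td}_4(X),$$
which simplifies to $\tfrac{1}{24}\bigl(D^4 + 2D^3 c_1 + D^2(c_1^2+c_2) + D\,c_1 c_2\bigr) + \operatorname{td}_4(X)$.

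Finally, integrating over $X$ produces the formula, provided one identifies the constant term $\int_X \operatorname{td}_4(X)$ with $\chi(\mathcal O_X)$. This identification is obtained by applying Hirzebruch--Riemann--Roch to the structure sheaf itself (i.e.\ to $D = 0$), which gives $\chi(\mathcal O_X) = \int_X \operatorname{td}_4(X)$; the point is that this absorbs all of the unknown Chern-class information on $X$ into a single number, so that the formula can be stated without reference to $c_3$ or $c_4$.

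The only mild obstacle is bookkeeping: making sure the numerical coefficients $\tfrac{1}{2},\tfrac{1}{12},\tfrac{1}{24}$ of the Todd class and the factorials $k!$ appearing in $e^D$ combine to the common denominator $24$. Since all the ingredients are classical and no geometric input beyond smoothness of $X$ is used, the proof is essentially algebraic manipulation of characteristic classes after invoking Hirzebruch--Riemann--Roch.
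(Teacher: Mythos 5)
Your proof is correct and is exactly the argument the paper intends: the paper gives no proof, simply citing the lemma as a special case of the Riemann--Roch theorem, and your expansion of $\operatorname{ch}(\mathcal O_X(D))\cdot\operatorname{td}(\mathcal T_X)$ in degree $4$, with the constant term $\int_X \operatorname{td}_4(X)$ identified as $\chi(\mathcal O_X)$ via the case $D=0$, is the standard way to make that precise. The coefficient bookkeeping checks out ($\tfrac16\cdot\tfrac12=\tfrac{2}{24}$ and $\tfrac12\cdot\tfrac1{12}=\tfrac1{24}$), so nothing is missing.
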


\begin{lem}{\cite[Lemma 2.3]{PrZ16}}\label{chow}
Let $X$ be a smooth projective fourfold, let $\rho : \widetilde{X} \to X$ be a blowup along
a smooth surface $S \subset X$, and let $E = \rho^{-1}(S)$ be the exceptional divisor. Then
$$c_2( \widetilde{X})=\rho^\ast c_2(X)+\rho^\ast S+ \rho^\ast K_X\cdot E,$$
$$(\rho^\ast H)^4=H^4,\quad (\rho^\ast H)^3\cdot E=0, \quad (\rho^\ast H)^2\cdot E^2=-S\cdot H^2,$$
$$(\rho^\ast H)\cdot E^3= - H|_S\cdot K_S+ K_X\cdot H\cdot S,$$
$$E^4=c_2(X)\cdot S+K_X|_S\cdot K_S-c_2(S)-K_X^2\cdot S.$$
\end{lem}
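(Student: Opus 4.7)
The plan is to reduce each assertion to an intersection computation on the exceptional divisor $E$, using that $E=\rho^{-1}(S)\cong \Bbb P(\mathcal{N}_{S/X})$ is a $\Bbb P^1$-bundle $p\colon E\to S$ since $S\subset X$ has codimension two. Write $j\colon E\hookrightarrow \widetilde X$ for the inclusion, set $\mathcal{N}=\mathcal{N}_{S/X}$, and let $\zeta=c_1(\mathcal{O}_E(1))$ with the convention $\mathcal{O}_{\widetilde X}(E)|_E\cong \mathcal{O}_E(-1)$, so that $j^*E=-\zeta$. The Segre class formula for a rank-two projective bundle then yields
\[
p_*\zeta=1,\qquad p_*\zeta^2=-c_1(\mathcal{N}),\qquad p_*\zeta^3=c_1(\mathcal{N})^2-c_2(\mathcal{N}),
\]
while the normal bundle sequence $0\to \mathcal{T}_S\to\mathcal{T}_X|_S\to \mathcal{N}\to 0$ gives
\[
c_1(\mathcal{N})=K_S-K_X|_S,\qquad c_2(\mathcal{N})=c_2(X)\cdot S+K_S^2-K_S\cdot K_X|_S-c_2(S).
\]

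The formula for $c_2(\widetilde X)$ I would then deduce from the standard Chern class formula for the blowup of a smooth subvariety. Extracting the codimension-two piece of $c(\widetilde X)=\rho^*c(X)+j_*(\cdots)$ and simplifying by means of $c_1(\widetilde X)=\rho^*c_1(X)-E$ together with $c_1(\mathcal{T}_X)=-K_X$ produces the stated expression $\rho^*c_2(X)+\rho^*S+\rho^*K_X\cdot E$ after a short manipulation on $E$.

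The remaining four identities are projection-formula exercises. Since $\rho$ is birational, $(\rho^*H)^4=H^4$ is immediate, and $(\rho^*H)^3\cdot E=j_*\bigl(p^*(H|_S)^3\bigr)=0$ because $(H|_S)^3$ vanishes on the surface $S$. Observing that $E^k=(-1)^{k-1}j_*\zeta^{k-1}$, the projection formula along $p$ yields
\[
(\rho^*H)^2\cdot E^2=-H^2\cdot S,\qquad (\rho^*H)\cdot E^3=-H|_S\cdot c_1(\mathcal{N})=-H|_S\cdot K_S+K_X\cdot H\cdot S,
\]
\[
E^4=c_2(\mathcal{N})-c_1(\mathcal{N})^2;
\]
substituting the above expressions for $c_1(\mathcal{N})$ and $c_2(\mathcal{N})$ into the last line, the $K_S^2$ terms cancel, the identity $(K_X|_S)^2=K_X^2\cdot S$ is used, and the asserted form appears.

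The main obstacle is sign bookkeeping: the conventions for $\zeta$ (Grothendieck versus classical projectivisation) and for the Segre classes must be fixed once and propagated consistently throughout. Once this is in place, each identity reduces to the projection formula together with the two exact sequences above, so no deeper input is required beyond standard blowup intersection theory.
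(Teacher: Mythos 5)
The paper offers no proof of this lemma at all: it is imported verbatim from \cite{PrZ17}*{Lemma 2.3}, so your blind attempt is being compared against a citation rather than an argument. Your derivation is the standard one and it is correct. With the conventions $\mathcal O_{\widetilde X}(E)|_E\cong\mathcal O_E(-1)$ and Fulton's Segre classes, the pushforwards $p_*\zeta=1$, $p_*\zeta^2=-c_1(\mathcal N)$, $p_*\zeta^3=c_1(\mathcal N)^2-c_2(\mathcal N)$, together with $E^k=(-1)^{k-1}j_*\zeta^{k-1}$ and the normal bundle sequence, reproduce all four intersection numbers exactly; in particular for $E^4$ the $K_S^2$ terms cancel and the surviving cross term is $+K_X|_S\cdot K_S$, as stated (a sanity check with $S$ a plane in $\Bbb P^4$ gives $E^4=-3$ both ways). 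The only place you are thinner than you should be is the $c_2(\widetilde X)$ identity: extracting the codimension-two piece of the blowup Chern class formula yields $c_2(\widetilde X)=\rho^*c_2(X)-\rho^*c_1(X)\cdot E+j_*\bigl(\zeta+p^*c_1(\mathcal N)\bigr)$, and to arrive at the stated form you must identify $j_*\bigl(\zeta+p^*c_1(\mathcal N)\bigr)$ with $\rho^*[S]$; this is the excess intersection formula $\rho^*[S]=j_*\bigl(c_1(p^*\mathcal N/\mathcal O_E(-1))\bigr)$ for the codimension-two center, which your ``short manipulation on $E$'' leaves implicit. With that one identity made explicit your argument is complete, and it supplies a self-contained verification that the paper itself chose to outsource.
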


We will need the above lemma for blowups of $\Bbb P^4$ along an irreducible surface with ordinary double points. 

\begin{lem}{\cite[Lemma 2.1]{HaL18}}\label{chow1}
Let $\rho : \widetilde{\Bbb P^4} \to \Bbb P^4$ be a blowup along
an irreducible surface $F$ with $\delta$ transverse double points, and let $E = \rho^{-1}(F)$ be the exceptional divisor.  Let $S$ denote the normalization of $F$,  $K_S$ its canonical class, $C$ its general sectional curve and $d=\deg F$. Let $H$ denote the divisor of the hyperplane class of $\Bbb P^4$. 
Then
$$(\rho^\ast H)^3E=0, \quad (\rho^\ast H)^2E^2=-d, \quad (\rho^\ast H) E^3= - 5d-K_S C, \text{ and } E^4=d^2-25d-10K_S C-K_S^2+4\delta.$$
\end{lem}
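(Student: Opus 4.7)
The plan is to reduce to the smooth-center Lemma \ref{chow} by first resolving the nodes of $F$ and then relating the resulting smooth model to $\widetilde{\Bbb P^4}$ via the universal property of the blow-up. Let $p_1,\dots,p_\delta$ denote the transverse double points of $F$, and let $\mu\colon Y_1\to\Bbb P^4$ be the blow-up at $\{p_1,\dots,p_\delta\}$ with exceptional divisors $P_i\cong\Bbb P^3$. In analytic coordinates at $p_i$ in which $\mathcal I_F = (x_1,x_2)\cap(x_3,x_4) = (x_1x_3,x_1x_4,x_2x_3,x_2x_4)$, substituting $x_j = x_1 t_j$ in the chart $t_1 = 1$ yields $\mu^{-1}\mathcal I_F = x_1^2(t_3,t_4)$, which shows globally that
$$\mu^{-1}\mathcal I_F\cdot\mathcal O_{Y_1} \;=\; \mathcal O\bigl(-2{\textstyle\sum_i} P_i\bigr)\cdot\mathcal I_{F'},$$
where $F'$ is smooth, isomorphic to the normalization $S$ of $F$, and meets each $P_i$ transversally in two disjoint lines $\ell_{i,1}\sqcup\ell_{i,2}$. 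Let $\nu\colon Y_2\to Y_1$ be the blow-up along $F'$, with exceptional divisor $E_2$, and set $\widetilde P_i := \nu^{-1}_*P_i$. Then $(\mu\nu)^{-1}\mathcal I_F\cdot\mathcal O_{Y_2} = \mathcal O(-E_2-2\sum_i\widetilde P_i)$ is invertible, so the universal property of $\rho$ produces a unique birational morphism $q\colon Y_2\to\widetilde{\Bbb P^4}$ with $q^*(\rho^*H) = (\mu\nu)^*H$ and $q^*E = E_2+2\sum_i\widetilde P_i$.

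Since $q$ is proper and birational, top-degree intersection numbers on $\widetilde{\Bbb P^4}$ equal those on $Y_2$:
$$(\rho^*H)^a\cdot E^b \;=\; \bigl((\mu\nu)^*H\bigr)^a\cdot\bigl(E_2+2{\textstyle\sum_i}\widetilde P_i\bigr)^b.$$
I would expand the right-hand side and apply Lemma \ref{chow} to each of the two smooth blow-ups $\mu$ (with $K_{Y_1} = \mu^*K_{\Bbb P^4}+3\sum P_i$ and the standard point-blow-up identities for $P_i^k$) and $\nu$ (with center $F'\cong S$, hyperplane class $\mu^*H$, degree $d$ and canonical class $K_S$). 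The crucial simplification is that for any positive-dimensional cycle $Z\subset Y_2$ lying over a node $p_i$, the projection formula forces $(\mu\nu)^*H\cdot Z = 0$; this kills every mixed term containing at least one $(\mu\nu)^*H$ factor and at least one $\widetilde P_i$ factor. Consequently the formulas for $(\rho^*H)^3 E$, $(\rho^*H)^2 E^2$ and $(\rho^*H)E^3$ follow directly from Lemma \ref{chow} applied to $\nu$, using $\mu_*F' = F$ and $(\mu^*H)|_{F'}\cdot K_{F'} = C\cdot K_S$.

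The main obstacle is the $E^4$ formula, where no $(\mu\nu)^*H$ factor is available to suppress the node contributions. Expanding,
$$(E_2+2{\textstyle\sum_i}\widetilde P_i)^4 = E_2^4 + 8 E_2^3{\textstyle\sum_i}\widetilde P_i + 24 E_2^2{\textstyle\sum_i}\widetilde P_i^2 + 32 E_2{\textstyle\sum_i}\widetilde P_i^3 + 16{\textstyle\sum_i}\widetilde P_i^4,$$
each mixed term $E_2^{4-k}\widetilde P_i^{k}$ must be evaluated from the explicit geometry of $\widetilde P_i$ (the blow-up of $\Bbb P^3\cong P_i$ along the two skew lines $\ell_{i,1},\ell_{i,2}$) together with the incidence of the $\Bbb P^1$-bundle $E_2 = \Bbb P(N_{F'/Y_1})$ over $\ell_{i,1}\sqcup\ell_{i,2}\subset F'$. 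The term $E_2^4$ is given by Lemma \ref{chow} applied to $\nu$, but with $c_2(Y_1)$ and $K_{Y_1}^2$ themselves carrying the point-blow-up correction. A careful node-by-node summation should verify that all these contributions collapse into the single correction $+4\delta$; this combinatorial bookkeeping at the nodes is the technical heart of the proof.
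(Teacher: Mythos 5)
The paper itself offers no proof of this statement: it is quoted verbatim from \cite[Lemma 2.1]{HaL18}, so there is no in-paper argument to compare against. Judged on its own terms, your overall strategy (blow up the nodes first, then blow up the smooth strict transform $F'$, and descend to $\widetilde{\Bbb P^4}$ via the universal property, with $q^*E=E_2+2\sum_i\widetilde P_i$) is sound, and your derivations of $(\rho^*H)^3E$, $(\rho^*H)^2E^2$ and $(\rho^*H)E^3$ are complete and correct; in particular the observation that every mixed term containing both an $H$-factor and a $\widetilde P_i$-factor dies is exactly right.

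The $E^4$ formula, however, is not established, and that is the only place $\delta$ enters, i.e.\ the only content beyond Lemma \ref{chow}. Two concrete problems. First, the identification $F'\cong S$ is false: blowing up $\Bbb P^4$ at $p_i$ blows up each smooth branch of $F$ at $p_i$, so $F'$ is the blow-up of $S$ at the $2\delta$ points over the nodes, and the $\ell_{i,j}$ are $(-1)$-curves on $F'$ (your own description of $F'\cap P_i$ as a pair of curves is already incompatible with $F'\cong S$, since $S\to F$ is finite). Hence $K_{F'}^2=K_S^2-2\delta$ and $c_2(F')=c_2(S)+2\delta$, so feeding $K_S$ and $c_2(S)$ into Lemma \ref{chow} for $\nu$, as you propose, gives the wrong $E_2^4$; it happens not to affect the first three formulas only because there one merely needs $C\cdot K_{F'}=C\cdot K_S$. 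Second, the ``node-by-node summation'' you defer is where all the work lies, and it needs inputs your outline never supplies: $\ell_{i,j}^2=-1$ on $F'$, the point-blow-up correction $c_2(Y_1)=\mu^*c_2(\Bbb P^4)+2\sum_iP_i^2$ (not covered by Lemma \ref{chow}), and the self-intersection/double-point formula to trade $c_2(S)$ for $d^2-10d-5K_SC-K_S^2+2\delta$. Carrying it out, one finds $8E_2^3\widetilde P_i+24E_2^2\widetilde P_i^2+32E_2\widetilde P_i^3+16\widetilde P_i^4=8(-4)+24(2)+32(0)+16(-1)=0$ per node, so $E^4=E_2^4$, and the $+4\delta$ emerges from the interplay of $c_2(Y_1)\cdot F'$, $K_{Y_1}|_{F'}\cdot K_{F'}$ and $-c_2(F')$. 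Asserting that these contributions ``should'' collapse to $+4\delta$ is precisely the claim to be proved, so as written the proposal has a genuine gap at its stated technical heart.
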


The next lemma follows from the corresponding result in the three-dimensional case, see \cite[Prop. 1]{Muk89}, \cite{IsP99, PCS05} and \cite[Lem. (2.8)]{Isk77}.

\begin{lem}\label{Mukaig}
Let $X_{2g-2}$ be a Mukai fourfold of genus $g  \ge 4$ with at
worst terminal Gorenstein singularities and with ${\mathrm{rank}}\  {\mathrm{Pic}} X_{2g-2} = 1$. Assume
that the linear system $|-\frac{1}{2} K_{X_{2g-2}}|$ is base point free. Then the divisor $|-\frac{1}{2} K_{X_{2g-2}}|$ is very ample and defines an embedding $X_{2g-2}\to \Bbb P^{g+2}$.

\end{lem}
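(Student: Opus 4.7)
The strategy is to reduce to the three-dimensional analog by cutting $X_{2g-2}$ with a general member of $|H|$, where $H := -\tfrac{1}{2}K_{X_{2g-2}}$. By Bertini applied to the base-point-free system $|H|$, a general $Y \in |H|$ is an irreducible threefold, and by the standard fact that a generic hyperplane section of a terminal Gorenstein fourfold remains terminal Gorenstein, $Y$ has at worst terminal Gorenstein singularities. Adjunction yields $K_Y = (K_X + H)|_Y = -H|_Y$, so $Y$ is a Fano threefold of index one with $(-K_Y)^3 = H^4 = 2g-2$, i.e.\ of Fano genus $g$. Moreover $|{-K_Y}| = |H|_Y|$ is base point free as the restriction of a base-point-free linear system, and $\mathrm{Pic}(Y)$ has rank one by the Lefschetz hyperplane theorem for the ample divisor $H$.

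At this point the three-dimensional result cited from \cite{Muk89,IsP99,PCS05,Isk77} applies to $Y$ and shows that $|-K_Y|$ is very ample, defining an embedding $Y \hookrightarrow \Bbb P^{g+1}$; in particular $h^0(Y, H|_Y) = g+2$. To transfer this to $X$, I would use the short exact sequence
$$0 \longrightarrow \mathcal O_X \longrightarrow \mathcal O_X(H) \longrightarrow \mathcal O_Y(H|_Y) \longrightarrow 0$$
together with the Kodaira-type vanishing $H^1(X, \mathcal O_X)=0$ available on the Fano fourfold $X$. This yields the surjection $H^0(X,H) \twoheadrightarrow H^0(Y, H|_Y)$ and the dimension count $h^0(X, H) = g+3$, so $\phi_{|H|}$ maps $X$ to $\Bbb P^{g+2}$ as required.

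Finally, to upgrade very ampleness from $Y$ to $X$, I would note that $H$ is ample on $X$ (since $\mathrm{Pic}(X) = \Bbb Z H$ and $-K_X = 2H$ is ample), so $\phi_{|H|}$ is a finite morphism. Separation of any pair of points, or of a tangent direction at a point, is then reduced to the three-dimensional embedding theorem by choosing a smooth member of $|H|$ containing the prescribed locus (possible because $\dim|H| = g+2 \ge 6$ gives ample flexibility) and lifting the separating section from $Y$ to $X$ via the surjection above. The main obstacle I anticipate lies in this last step: verifying that a \emph{smooth} member of $|H|$ passes through every chosen pair of points or tangent vector in the presence of the terminal singularities of $X_{2g-2}$, and ensuring that Bertini applies uniformly enough to make the reduction to the threefold case go through. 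Once this geometric flexibility is confirmed, the argument is essentially a formal bootstrap from the three-dimensional case.
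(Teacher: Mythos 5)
Your overall strategy---cut $X_{2g-2}$ with a general member $Y\in |H|$, $H=-\tfrac12 K_{X_{2g-2}}$, and invoke the threefold results of Iskovskikh, Mukai and Przyjalkowski--Cheltsov--Shramov---is exactly what the paper intends: the paper offers nothing beyond the citation, and your first two paragraphs (Bertini for terminal Gorenstein singularities, adjunction giving $-K_Y=H|_Y$ with $(-K_Y)^3=2g-2$, base point freeness of $|-K_Y|$, and the count $h^0(X,H)=g+3$ via $H^1(X,\mathcal O_X)=0$) are a correct expansion of that reduction.

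The genuine gap is the one you flag yourself, and the repair you sketch does not work as stated. To separate a pair of points $x\neq y$ (or a tangent vector at $x$) you propose to pick a smooth member of $|H|$ through the prescribed locus and lift a separating section. But such a member is general only inside the codimension-two subsystem of divisors through $x$ and $y$, whose base locus is precisely the set of points that $|H|$ fails to separate from $x$ and $y$; Bertini gives no control there, so the argument is circular. Nor can one get by with \emph{general} members alone: since $H$ is ample and base point free, $\phi_{|H|}$ is finite, and what very ampleness of $H|_Y$ for general $Y$ buys you is only that $\deg\phi_{|H|}=1$ (a generically $k:1$ map with $k\ge 2$ identifies points over a dense subset, which a general $Y$ detects). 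After that $\phi_{|H|}$ factors through the normalization of its image, and an \emph{isolated} non-normal point of the image---an isolated pair of identified points, or an isolated point where $\phi_{|H|}$ fails to be an immersion---is simply missed by the preimage of a general hyperplane. So the threefold theorem applied to a single general $Y$ does not formally imply the fourfold statement. To finish one must either quote the cited results in the higher-dimensional form in which Mukai states them (for coindex-$3$ Fano manifolds of arbitrary dimension, where the induction step from $Y$ to $X$ is carried out by analysing the subsystems of members through a prescribed point and their possible degenerations), or reproduce that induction step, with the terminal Gorenstein threefold case of \cite{IsP99,PCS05} as the base of the induction.
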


\subsection{Gushel-Mukai fourfolds}
Let us recall some general facts about the theory of Gushel-Mukai fourfolds which have been proved in \cite{DIM15}. A smooth Gushel-Mukai fourfold is a smooth dimensionally transverse intersection
$$\rm{CGr(2,5)}\cap \Bbb P^8\cap Q$$
of the cone over the Grassmannian ${\mathrm{Gr}}(2, 5)$ of $2$-dimensional subspaces in a fixed $5$-dimensional vector space, with a linear subspace $\Bbb P^8$ and a quadric $Q$.   This class of varieties includes all smooth prime Fano fourfold $X = X_{10}$ of index $2$, genus $6$ and degree $10$ (that is, such that there is an ample class $H$ with ${\mathrm{Pic}}(X) = \Bbb Z H$, $K_{X} = -2H$, and $H^4 = 10$). There are two types of Gushel-Mukai fourfolds:
\begin{enumerate}[$\bullet$]
\item quadratic sections of hyperplane sections of ${\mathrm{Gr}}(2,5) \subset \Bbb P^9$ (Mukai or ordinary fourfolds, \cite{Muk89});
\item double covers of ${\mathrm{Gr}}(2, 5) \cap \Bbb P^7$ branched along its intersection with a quadric (Gushel fourfold, \cite{Gus82}).
\end{enumerate}

Let $\mathcal{M}_4^{GM}$ be the irreducible moduli space for (smooth) prime Fano fourfolds of degree $10$ and index $2$. Following \cite{Has00}, one says that a fourfold $X\in \mathcal{M}_4^{GM}$ is \emph{Hodge-special} if the rank of the (positive definite) lattice $A(X) = H^{2,2}(X) \cap H^4(X, \Bbb Z)$ is at least $3$ (see \cite[Section 6.2]{DIM15} for precise definitions). This means that the fourfold contains a surface whose cohomology class “does not come” from
the Grassmannian ${\mathrm{Gr}}(2,5)$. Hodge-special Gushel-Mukai fourfolds are parametrized by a countable union of hypersurfaces 
$$
\bigcup_d \mathcal{GM}_d \subset \mathcal{M}_4^{GM}
$$
for $d\equiv 0, 2$ and $4$ (mod $8$) where $d$ is the discriminant of the rank $3$ lattice $A(X)$. 
 Any (smooth) Gushel-Mukai  fourfold $X$ is unirational (\cite[Proposition 3.1]{DIM15}). The question of their rationality is not settled and very alike the case of cubic fourfolds in $\Bbb P^5$:  it is expected that a very general GM fourfold is not rational, but not a single example is known. There are families of (smooth) rational Gushel-Mukai fourfolds, so that it is natural to ask wether cylindrical Gushel-Mukai fourfolds exist.

There are two classical (codimension 1) families of rational Hodge-special Gushel-Mukai fourfolds, and we show that the general member in one of these families contains an $\Bbb A^1$-cylinder. Let $\mathcal{GM}_{10}$ be the Hodge-special Gushel-Mukai fourfolds of discriminant $10$ which is the union of two irreducible hypersurfaces, say $\mathcal{GM}_{10}'$ and $\mathcal{GM}_{10}''$ described as follows:  

\begin{enumerate}\label{tau}
\item  $\mathcal{GM}_{10}'$: a \emph{$\tau$-quadric} surface  in ${\mathrm{Gr}}(2,5)$ is a linear section of ${\mathrm{Gr}}(2, 4)\subset {\mathrm{Gr}}(2, 5)$.  In \cite[Proposition 7.4]{DIM15}, it is shown that the closure of the family of Gushel-Mukai fourfolds containing a $\tau$-quadric is the hypersurface $\mathcal{GM}_{10}'$. This family is studied in Theorem \ref{Main}.

\item $\mathcal{GM}_{10}''$: A \emph{quintic del Pezzo} surface is a two dimensional linear section of ${\mathrm{Gr}}(2,5)$. In \cite[Proposition 7.7]{DIM15}, it is shown that the closure of the family of Gushel-Mukai fourfolds containing a quintic del Pezzo surface is the hypersurface $\mathcal{GM}_{10}''$.
\end{enumerate}

\begin{rem}
The only further known family of rational Gushel-Mukai fourfolds is $\mathcal{GM}_{20}$ which is the closure of Gushel-Mukai fourfolds containing the image of $\Bbb P^2$ by the linear system of quartic curves through three simple points and one double point in general position (see \cite{HoS20}). 
\end{rem}

\subsection{Fano fourfold $X_{14} \subseteq \Bbb P^{10}$ of index $2$ and genus $8$}

Any Fano fourfold $X_{14} \subseteq \Bbb P^{10}$ of index $2$ and genus $8$ with ${\mathrm{Pic}}(X_{14}) \cong \Bbb Z $ is a section of the Grassmannian ${\mathrm{Gr}}(2, 6)$ under its Pl\"{u}cker embedding in $\Bbb P^{14}$ by a linear subspace of dimension $10$. 
From \cite[16.2]{BoH58} we have the formula of the total Chern class of ${\mathrm{Gr}}(2,6)$. Arranging the Schubert cycles in a triangle as follows will make the  remaining calculations more convenient
$$\begin{pmatrix}
1&&&&\\
\sigma_1&\sigma_{1,1}&&&\\
\sigma_2&\sigma_{2,1}&\sigma_{2,2}&&\\
\sigma_3&\sigma_{3,1}&\sigma_{3,2}&\sigma_{3,3}&\\
\sigma_4&\sigma_{4,1}&\sigma_{4,2}&\sigma_{4,3}&\sigma_{4,4}\\
\end{pmatrix}.
$$
If we only write down the coefficients of the corresponding Schubert cycles, the total Chern class is
$$c({\mathrm{Gr}}(2,6))=\begin{pmatrix}
1&&&&\\
6&18&&&\\
16&58&67&&\\
26&91&120&65&\\
31&90&105&60&15\\
\end{pmatrix}.
$$
The adjunction formula for the total Chern classes yields
  $c(X_{14})=\frac{c({\mathrm{Gr}}(2,6))}{(1+\sigma_1)^4}$. Thus, we have
$$c(X_{14})=\begin{pmatrix}
1&&&&\\
2&4&&&\\
2&2&5&&\\
2&0&-2&5&\\
2&-2&7&-18&27\\
\end{pmatrix}.
$$
 Therefore, $c_0(X_{14})=1$, $c_1(X_{14})=2\sigma_1$, $c_2(X_{14})=2\sigma_2+4\sigma_{1,1}$, and $c_4(X_{14})=2\sigma_4+5\sigma_{2,2}$. Then the topological Euler number
$$\chi(X_{14})=c_4(X_{14})\cdot [X_{14}]=(2\sigma_4+5\sigma_{2,2})|_{X_{14}}\cdot \sigma_1^4|_{X_{14}}\cdot[{\mathrm{Gr}}(2,6)]=12.$$
Note that the groups $H^q(X_{14},\Bbb Z)$ vanish if $q$ is odd, $H^2(X_{14},\Bbb Z) \cong H^6(X_{14},\Bbb Z) \cong \Bbb Z$, and ${\mathrm{rank}} \  H^4 ( X_{14},\Bbb Z ) = 2$.  The intersection form on $\lambda_G := H^4({\mathrm{Gr}}(2,6),\Bbb Z)|_{X_{14}}$ has matrix  in the basis 
$$(\sigma_{1,1}|_{X_{14}},\sigma_{2}|_{X_{14}}).$$
 A quintic del Pezzo surface can be obtained as a linear section of ${\mathrm{Gr}}(2, 5)\subset{\mathrm{Gr}}(2,6)$;
its class  is $$\sigma_{1,1}|_{X_{14}}=\sigma_1^4\sigma_{1,1}
 =3\sigma_{4,2}+2\sigma_{3,3}$$ in ${\mathrm{Gr}}(2, 6)$. 
 Thus, $X_{14}$  contains a quintic del Pezzo surfaces $\Sigma$.  
 
The rationality of $X_{14}$ has  been shown by  Xu  in \cite[Theorem 2.2.1]{Xu11}. Note that the following Proposition \ref{pro4}  gives a different proof of the rationality. 

\section{Birational maps to Mukai fourfolds}\label{SarkisovLinks}

In this section, we give explicit constructions of the smooth rational Mukai fourfolds $X_{2g-2}$ of genus $g=6$, $7$, $8$ and $9$ via Sarkisov links. The proof is standard and similar to \cite[Proposition 3.1]{PrZ16}, but we recall the proof for the reader's convenience.


\begin{prop}\label{pro4}
Let $ X_{2g-2} \subseteq \Bbb P^{g+2}$ be a Mukai fourfold of genus $g=6$, $7$, $8$, $9$ with ${\mathrm{Pic}}(X ) \cong \Bbb Z L$, and let $L$ be the hyperplane class of $X_{2g-2} $. 
Suppose that $X_{2g-2}$ contains   a smooth  surface $\Sigma$  as in Table \ref{S95}. Then we have the following statements. 
\begin{enumerate}[$i)$]
\item The linear system $|L - \Sigma|$  of hyperplanes passing through $\Sigma$ defines a birational map 
$$\Phi:X_{2g-2} \dashrightarrow  \Bbb P^{4}.$$

\item There is a commutative diagram
$$\xymatrix{&D\ar@{^{(}->}[r]\ar[ld]&\widetilde{X_{2g-2}}\ar[dl]_{\varphi}\ar[dr]^{\rho}&E\ar@{_{(}->}[l]\ar[rd]&\\
\Sigma\ar@{^{(}->}[r]&X_{2g-2}\ar@{-->}[rr]^{\Phi}&&\Bbb P^4&F,\ar@{_{(}->}[l]}$$
where $\varphi$ is the blowup of $\Sigma$ with exceptional divisor $D$ and $\rho$  is a birational morphism defined by the linear system $|\varphi^\ast L - D|$.

\item The $\rho$-exceptional locus is an irreducible divisor $E\subset \widetilde{X_{2g-2}}$.
\item Let  $H$ be the ample generator of ${\mathrm{Pic}}(\Bbb P^4)$. Then
$$\begin{aligned}
\varphi^\ast L \sim 4\rho^\ast H- E,\quad\quad\quad\quad&\quad\quad D\sim 3\rho^\ast H-E,\\
\rho^\ast H \sim \varphi^\ast L- D, \quad\quad\quad\quad&\quad\quad E\sim 3\varphi^\ast L-4D.
\end{aligned}
$$

\item The image $\rho(E)$ is  a surface $F\subset \Bbb P^{4}$ with at most isolated singularities as in the Table \ref{S95}. 
\item $\rho(D)$ is a cubic threefold in $\Bbb P^4$ containing $F$ and $\varphi(E)$ is a cubic hypersurface section of  $X_{2g-2}$ singular along $\Sigma$.


\item $X_{2g-2}\backslash \varphi(E)\cong \Bbb P^4\backslash \rho(D)$.
\item The cubic threefold $\rho(D)$ is singular and its singular locus coincides with the locus of points $p \in \rho(D)$, such that the restriction $\rho|_D:D\to \rho(D)$ is not an isomorphism over $p$.

\end{enumerate}

\end{prop}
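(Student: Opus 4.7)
The plan is to set up the standard Sarkisov link and extract the eight statements from its geometry. For (i), I would first show $\dim |L - \Sigma| = 4$ using the short exact sequence $0 \to \mathcal I_\Sigma(L) \to \mathcal O_{X_{2g-2}}(L) \to \mathcal O_\Sigma(L|_\Sigma) \to 0$ together with the case-by-case invariants of $\Sigma$ listed in Table \ref{S95}. Birationality of the associated rational map $\Phi: X_{2g-2} \dashrightarrow \Bbb P^4$ can then be checked by computing the degree of $\Phi$, i.e., showing that the general intersection of four members of $|L-\Sigma|$ meets $X_{2g-2}$ at exactly one residual point outside $\Sigma$.

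For (ii) and (iii), I would resolve $\Phi$ by blowing up the smooth surface $\Sigma$: let $\varphi: \widetilde{X_{2g-2}} \to X_{2g-2}$ be the blowup, with exceptional divisor $D$. The strict transform of $|L - \Sigma|$ is precisely $|\varphi^\ast L - D|$, and base-point-freeness of this linear system amounts to the local statement that a general member of $|L-\Sigma|$ cuts out $\Sigma$ scheme-theoretically near $\Sigma$. The morphism $\rho$ is the one defined by $|\varphi^\ast L - D|$; since $\widetilde{X_{2g-2}}$ has Picard rank $2$ while $\Bbb P^4$ has Picard rank $1$, $\rho$ realizes the other extremal Mori contraction of $\widetilde{X_{2g-2}}$, so its exceptional locus is an irreducible divisor $E$.

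Statement (iv) reduces to linear algebra in $\mathrm{Pic}(\widetilde{X_{2g-2}}) = \Bbb Z \varphi^\ast L \oplus \Bbb Z D$. The relation $\rho^\ast H \sim \varphi^\ast L - D$ is built into the construction. To determine the coefficients in $E \sim a \varphi^\ast L - b D$, I impose $(\rho^\ast H)^4 = 1$, $(\rho^\ast H)^3 \cdot E = 0$ and $(\rho^\ast H)^2 \cdot E^2 = -\deg F$ (the latter two reflecting that $\rho(E) = F$ is a surface), and evaluate the resulting intersection numbers using Lemma \ref{chow} together with the numerical data of $\Sigma$. For (v) and (vi), pushing $D \sim 3 \rho^\ast H - E$ down via $\rho$ realizes $\rho(D)$ as a cubic hypersurface in $\Bbb P^4$ containing $F$, while pushing $E \sim 3 \varphi^\ast L - 4D$ down via $\varphi$ produces $\varphi(E)$ as a cubic hypersurface section of $X_{2g-2}$ of multiplicity at least two along $\Sigma$, hence singular there. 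The invariants of $F$ (degree, sectional genus, number of nodes) in Table \ref{S95} are then read off by combining Lemmas \ref{chow} and \ref{chow1}.

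Statement (vii) is formal: both $X_{2g-2} \setminus \varphi(E)$ and $\Bbb P^4 \setminus \rho(D)$ are canonically identified with $\widetilde{X_{2g-2}} \setminus (D \cup E)$, using $\Sigma \subset \varphi(E)$ and $F \subset \rho(D)$ to glue across the contracted loci. For (viii), the divisor $D$ is a $\Bbb P^1$-bundle over the smooth surface $\Sigma$, hence smooth, and $\rho|_D$ is a proper birational morphism. If $\rho|_D$ is an isomorphism over $p$, then $\rho(D)$ is smooth at $p$ because $D$ is, giving $\mathrm{Sing}(\rho(D))$ is contained in the non-isomorphism locus. For the reverse inclusion, I would use that $\rho|_D$ is an isomorphism off $D \cap E$ (because $\rho$ itself is an isomorphism off $E$), and analyze the contraction $\rho|_{D \cap E}: D \cap E \to F \cap \rho(D) = F$: at any point where $\rho|_D$ fails to be an isomorphism, the scheme-theoretic fiber is either positive-dimensional or non-reduced, and either situation forces a singularity of $\rho(D)$ at that point by normality arguments combined with Zariski's Main Theorem. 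The main obstacle throughout is the base-point-freeness of $|\varphi^\ast L - D|$ together with the case-by-case analysis needed to identify $F$ and its singularities uniformly across the four families; everything else reduces to intersection-theoretic bookkeeping.
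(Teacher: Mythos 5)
Your outline for parts $i)$--$vii)$ follows essentially the same route as the paper: resolve $\Phi$ on the blowup of $\Sigma$, use that $\widetilde{X_{2g-2}}$ is a Fano fourfold of Picard rank $2$ to produce the second Mori contraction $\rho$, and pin down the divisor classes and the invariants of $F$ with the intersection numbers of Lemmas \ref{chow} and \ref{chow1}; part $vii)$ is the same formal gluing. One small circularity in your $iv)$: you cannot normalize $E\sim a\varphi^\ast L-bD$ by imposing $(\rho^\ast H)^2\cdot E^2=-\deg F$, since $\deg F$ is an output of that very computation. The single linear equation $(\rho^\ast H)^3\cdot E=0$ only determines $(a,b)$ up to scale; the paper fixes the scale by exhibiting $E$ as the unique effective divisor in $|3\varphi^\ast L-4D|$ (alternatively one may use the discrepancy relation $K_{\widetilde{X_{2g-2}}}=\rho^\ast K_{\Bbb P^4}+E$). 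You also leave out the reason $F$ has only isolated singularities, which in the paper comes from the finiteness of $2$-dimensional fibers of $\rho$ and the main theorem of Andreatta--Wi\'sniewski.

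The genuine gap is in $viii)$. First, you do not address why $\rho(D)$ is singular at all; the paper deduces this from the fact that a smooth cubic threefold has Picard group $\Bbb Z H$ by Lefschetz, so every surface it contains is a hypersurface section --- in particular of degree divisible by $3$ and of the corresponding sectional genus --- which the surfaces $F$ of Table \ref{S95} violate. Second, and more seriously, your claim that a positive-dimensional or non-reduced fiber of $\rho|_D$ over $p$ forces $\rho(D)$ to be singular at $p$ ``by normality arguments combined with Zariski's Main Theorem'' is false as stated: a proper birational morphism from a smooth variety routinely has positive-dimensional fibers over smooth points of the target (any blowup of a smooth center does). The missing ingredient is crepancy: since $-K_D\sim 2(\varphi^\ast L-D)|_D\sim\rho^\ast(2H)|_D$ while adjunction gives $K_{\rho(D)}=-2H|_{\rho(D)}$ for the cubic threefold $\rho(D)$, the restriction $\rho|_D\colon D\to\rho(D)$ is a crepant birational morphism from a smooth threefold. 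Over the smooth locus of $\rho(D)$ its exceptional set would be purely divisorial, and a divisor extracted over a smooth point has discrepancy at least $1$, contradicting discrepancy $0$; hence $\rho|_D$ is an isomorphism exactly over the smooth points, which is the content of $viii)$. Without the crepancy observation your argument for the inclusion of the non-isomorphism locus into the singular locus does not go through.
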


\begin{table}[h!]
\centering
		\begin{tabular}{ |r| c| c| c|c|c|c|c|c|c|}
	\hline
	 $g$& $\Sigma\subset X_{2g-2}$&$d(\Sigma)$ &  $\pi(\Sigma)$ &$d(F)$&$\pi(F)$&$\#$Sing $F$&$(\varphi^\ast L)^2\cdot D^2$&$(\varphi^\ast L)\cdot D^3$&$D^4$\\ 
	\hline
	$6$&$\tau$-quadric surface &$2$   & $0$   &$8$ & $6$ &$1$&$-2$ &$0$ &$3$ 	\\ 
	\hline
	$7$&cubic scroll surface &$3$   & $0$   &$7$ & $3$ &$3$ &$-3$ &$-1$ &$3$ 	\\ 
	\hline
	$8$&quintic  del Pezzo &$5$   & $1$   &$7$ & $4$ &$0$&$-5$ &$-5$ &$-3$ 	\\ 
	\hline
	$9$&sextic  del Pezzo &$6$   & $1$   &$6$ & $1$ &$3$ &$-6$ &$-6$ &$-3$ 	\\ 
	\hline
	
\end{tabular}
\caption{Invariants of the pair of surfaces $\Sigma \subset X_{2g-2}$ and $F\subset \Bbb P^4$ depending on the genus $g$, where $\pi(\Sigma)$ and $\pi(F)$ is the sectional genus of the surface $\Sigma$ and $F$, respectively.}
\label{S95}
\end{table}

\begin{rem}
By Proposition \ref{pro4} $viii)$, the cubic threefold $\rho(D)\subset \Bbb P^4$ is always singular. 
In Section \ref{geometryCubicThreefold}, we will describe the geometry and singularities of the cubic threefolds and the surfaces $F$ with invariants as in Table \ref{S95} depending on the genus $g\in \{6,7,8,9\}$. 
\end{rem}

\begin{proof}[Proof of Proposition \ref{pro4}.]
To $i)$ and $ii)$: Since the intersection of $X_{2g-2}$ and the $(g-3)$-plane $\langle\Sigma\rangle$ is $X_{2g-2}\cap \langle\Sigma \rangle= \Sigma$, 
the linear system $|\varphi^\ast L - D|$ is base point free. Thus, the divisor $$-K_{\widetilde{X_{2g-2}}} = -K_{X_{2g-2}} - D =  \varphi^\ast L+\varphi^\ast L - D$$ is ample, that is, $\widetilde{X_{2g-2}}$ is a Fano fourfold whose rank of ${\mathrm{Pic}}(\widetilde{X_{2g-2}})$ is two.  
By the Cone theorem, there exists a Mori contraction $\rho: \widetilde{X_{2g-2}} \to U$ different from $\varphi$. 
 \\
By Lemma \ref{chow}, we have $(\varphi^\ast L)^4=2g-2$, and $(\varphi^\ast L)^3\cdot D=0$, $(\varphi^\ast L)^2\cdot D^2$, $(\varphi^\ast L)\cdot D^3$ and $D^4$ are as in Table \ref{S95}. Therefore, we have  $(\varphi^\ast L-D)^3\cdot (3\varphi^\ast L-4D)=0$ and  $(\varphi^\ast L-D)^4 = 1$. 
This means that the divisor class of $\varphi^\ast L-D$ is not ample, and so it yields a supporting linear function of the extremal ray generated by the curves in the fibers of $\varphi$. Moreover, we can write $\varphi^\ast L-D=\rho^\ast H$, where $H$ is the ample generator of ${\mathrm{Pic}}(U) \cong \Bbb Z$. 
By the Riemann-Roch theorem and Kodaira vanishing theorem, we have $\dim |\varphi^\ast L-D|= 4$.  Thus, 
$\varphi^\ast L-D$ defines a birational morphism $\widetilde{X_{2g-2}} \to \Bbb P^4$ which coincides with the map $\rho$. The birationality of $\Phi$ follows. 

To $iii)$ and $iv)$: Since $\dim |3\varphi^\ast L-4D| = 0$ and $(\varphi^\ast L-D)^3\cdot (3\varphi^\ast L-4D)=0$, the linear system
$|3\varphi^\ast L-4D|$ contains a unique divisor $E$ contracted by $\rho$. Since rank of ${\mathrm{Pic}}(\widetilde{X_{2g-2}})$ is two, the
divisor $E$ is irreducible and the relations in $iv)$ follow.


To $v)$: Since $(\varphi^\ast L-D)^2\cdot E^2 = -d(F)$, the image $F=\rho(E)$ is a {\it surface} with $\deg F = H^2\cdot \rho(E) = d(F)$ (see Table \ref{S95}). Furthermore, the sectional genus of $F$ as given in Table \ref{S95} can be computed using Lemma \ref{chow1} or \cite[Lemma 2.2.14]{IsP99}. 
Since ${\mathrm{rank}}\ {\mathrm{Pic}} (\widetilde{X_{2g-2}})= 2$, the exceptional locus of $\rho$ coincides with $E$, and
$E$ is a prime divisor. Therefore, $\rho$ has at most a finite number of $2$-dimensional fibers. By the main theorem in \cite{AnW98}, $F$ has at most isolated singularities. We will describe the geometry and the singularities of $F$ in Section \ref{geometryCubicThreefold}.

To $vi)$: From the relations $E\sim 3\varphi^\ast L-4D$ and  $D\sim 3\rho^\ast H-E$ in ${\widetilde{X_{2g-2}}}$, we deduce that the images $\varphi(E)$ and $\rho(D)$ are a cubic hypersurface sections of $X_{2g-2}$ and $\Bbb P^4$, respectively. Furthermore, $\varphi(E)$ is singular along $\Sigma$.

%
%

To $vii)$:  Since $F \subset \rho(D)$ we have isomorphisms
$$X_{2g-2}\backslash \varphi(E)\cong {\widetilde{X_{2g-2}}}\backslash(E\cup D)\cong {\Bbb P^4}\backslash(F\cup \rho(D)) \cong \Bbb P^4\backslash \rho(D).$$

To $viii)$: The cubic threefold $\rho(D)$ is singular because it contains the surface
$F$ of degree $\ge 6$ and of sectional genus as in Table \ref{S95}. Since $-K_D \sim 2(\varphi^\ast L - D)|_D \sim \rho^*(2H)|_D$, the restriction $\rho|_D : D \to \rho(D)$ is a crepant morphism.

\end{proof}


We have the following inverse picture to Proposition \ref{pro4} which includes nice geometric features. 

\begin{prop}\label{main10}
Let $F\subset \Bbb P^4$ be a surface as in Table \ref{S95} contained in a unique cubic threefold $W_3$. Then we have the following statements. 
\begin{enumerate}[$i)$]
\item The linear system $|4H - F|$ of quartic hypersurfaces passing through $F$ defines a birational map 
$$\Psi:\Bbb P^4 \dashrightarrow X_{2g-2} \subset \Bbb P^{g+2},$$
where $X_{2g-2} =\Psi(\Bbb P^4)$ is a Mukai fourfold of genus $g$ with ${\mathrm{Pic}}(X_{2g-2} ) \cong \Bbb Z$.

\item There is a commutative diagram
$$\xymatrix{&E\ar@{^{(}->}[r]\ar[ld]&\widetilde{\Bbb P^4}\ar[dl]_{\rho}\ar[dr]^{\varphi}&D\ar@{_{(}->}[l]\ar[rd]&\\
F\ar@{^{(}->}[r]&\Bbb P^4\ar@{-->}[rr]^{\Psi}&&X_{2g-2}&\Sigma,\ar@{_{(}->}[l]}$$
where $\rho$ is the blowup of $F$ with exceptional divisor $E$  and  $\varphi$  is a birational morphism defined by the linear system $|4\rho^\ast H - E|$ with $\varphi$-exceptional divisor $D$.

\item $\rho(D)=W_3$  is a singular cubic threefold. The singularities of $\Sigma$ are the $\varphi$-images of planes in $W_3$ meeting $F$ along a quartic curve. For a general  surface $F$, the surface $\Sigma$ is smooth and $\varphi$ is the blowup of the surface $\Sigma$ as in Table \ref{S95}.

\item Let  $L$ be the ample generator of ${\mathrm{Pic}}(X_{2g-2})$. Then we have relations as in Proposition \ref{pro4}.
$$\begin{aligned}
\varphi^\ast L \sim 4\rho^\ast H- E,\quad\quad\quad\quad&\quad\quad D\sim 3\rho^\ast H-E,\\
\rho^\ast H \sim \varphi^\ast L- D, \quad\quad\quad\quad&\quad\quad E\sim 3\varphi^\ast L-4D.
\end{aligned}
$$
\end{enumerate}
\end{prop}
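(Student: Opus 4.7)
The plan is to reverse-engineer Proposition \ref{pro4}: let $\rho: \widetilde{\Bbb P^4} \to \Bbb P^4$ be the blowup along $F$ with exceptional divisor $E$, set $M := 4\rho^\ast H - E$ and $D := 3\rho^\ast H - E$, and prove that $|M|$ resolves $\Psi$ into a birational morphism $\varphi$ onto a Mukai fourfold $X_{2g-2}$ with $\varphi$-exceptional divisor $D$.

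First I would carry out the numerical bookkeeping. Plugging the invariants of $F$ from Table \ref{S95} into Lemma \ref{chow1}, I expect to verify the intersection numbers $M^4 = 2g-2$ and $M^3 \cdot D = 0$, together with the values of $M^2 \cdot D^2$, $M \cdot D^3$, $D^4$ recorded in Table \ref{S95}. These identities force $M$ to be nef and big and $D$ to span the extremal ray orthogonal to $M$ in the rank-two Picard group of $\widetilde{\Bbb P^4}$. Since by hypothesis $F$ lies on a unique cubic threefold, the linear system $|3\rho^\ast H - E|$ has dimension zero, so $D$ is precisely the strict transform $\widetilde{W_3}$, which gives the identification $\rho(D) = W_3$.

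Next I would establish the morphism $\varphi$ defined by $|M|$. By Riemann--Roch and Kodaira vanishing on $\widetilde{\Bbb P^4}$ one computes $\dim|M| = g+2$, and base-point freeness of $|M|$ reduces to the observation that the quartic hypersurfaces through $F$ cut out $F$ set-theoretically, so on the blowup the base locus is at worst contained in $E$; a local computation along $E$ using $\mathcal{N}_{F/\Bbb P^4}$ finishes the argument. The relation
\[
-K_{\widetilde{\Bbb P^4}} = 5\rho^\ast H - E = 2M - D
\]
then shows that on the image $X_{2g-2} := \varphi(\widetilde{\Bbb P^4})$ the anticanonical class equals $2L$ with $L := \varphi_\ast M$; combined with $L^4 = M^4 = 2g-2$ and Lemma \ref{Mukaig}, this identifies $X_{2g-2}$ as a Mukai fourfold of genus $g$ in $\Bbb P^{g+2}$. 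Picard rank one follows because $D$ is contracted by $\varphi$ and $\widetilde{\Bbb P^4}$ has Picard rank two, and the birationality of $\Psi = \varphi \circ \rho^{-1}$ is then automatic.

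Finally, for $(iii)$, the inequality $M \cdot D^3 < 0$ forces $D$ to be $\varphi$-exceptional, and since $M|_D \sim 2\rho^\ast H|_D$, the map $\varphi|_D$ factors through $\widetilde{W_3} \to W_3$ followed by a morphism induced by $|2H|_{W_3}|$. Higher-dimensional fibers of $\varphi|_D$ correspond to surfaces in $W_3$ which are collapsed; a direct computation shows these are planes $P \subset W_3$ meeting $F$ in a quartic curve, so each such plane contributes a singular point of $\Sigma$. For general $F$ no such planes exist, yielding smoothness of $\Sigma$ with the invariants recorded in Table \ref{S95}. The divisor relations in $(iv)$ are then immediate from the definitions of $M$ and $D$. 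The main obstacle will be this last step: while the numerical data guarantee the existence of the contraction $\varphi|_D$ and the relations in $(iv)$, identifying the image with the specific surface type of $\Sigma$ in each genus, and describing its singular locus geometrically in terms of planes in $W_3$, is not uniform and requires a case-by-case analysis compatible with Proposition \ref{pro4}.
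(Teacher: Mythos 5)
Your overall strategy is the paper's: blow up $F$, use Lemma \ref{chow1} to get $(4\rho^\ast H-E)^4=2g-2$ and $(4\rho^\ast H-E)^3\cdot D=0$, run the second extremal contraction, compute $\dim|4\rho^\ast H-E|=g+2$ by Riemann--Roch and Kodaira vanishing, and invoke Lemma \ref{Mukaig} to identify the image with a Mukai fourfold of genus $g$ in $\Bbb P^{g+2}$. Still, three steps need repair. First, you identify the unique member of $|3\rho^\ast H-E|$ with the strict transform of $W_3$; this holds only if $W_3$ has multiplicity one along $F$, since otherwise the unique member is $\widetilde{W_3}+(k-1)E$ and is reducible. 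The paper closes this by writing the strict transform as $3\rho^\ast H-kE$ and deducing $k=1$ from $0\le(4\rho^\ast H-E)^3\cdot D=-42(k-1)$; note that this inequality uses nefness of $4\rho^\ast H-E$, which comes from $F$ being cut out scheme-theoretically by quartics and \emph{not} from the intersection numbers, so your phrase ``these identities force $M$ to be nef'' has the logic backwards (you do give the base-point-freeness argument afterwards, so only the order needs fixing). Second, ``$M\cdot D^3<0$ forces $D$ to be $\varphi$-exceptional'' is wrong: for $g=6$ Table \ref{S95} gives $(\varphi^\ast L)\cdot D^3=0$, and in any case the relevant fact is $M^3\cdot D=0$, which you already have.

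Third, and most substantively, the description of the singularities of $\Sigma$ in $(iii)$ is neither a ``direct computation'' nor case-by-case; the paper's argument is uniform and rests on two cited theorems. If $\Sigma$ or $X_{2g-2}$ were singular, Ando's theorem \cite{And85} produces a two-dimensional fiber $\widetilde Y\subset D$; normality of $\Sigma$ (which itself must first be deduced from the finiteness of two-dimensional fibers via \cite{AnW98}) and \cite{AnW98} force $\widetilde Y\cong\Bbb P^2$ with $\rho^\ast H|_{\widetilde Y}=\mathcal O_{\Bbb P^2}(1)$; then $(4\rho^\ast H-E)|_{\widetilde Y}\sim 0$ gives $E|_{\widetilde Y}=\mathcal O_{\Bbb P^2}(4)$, i.e.\ $\rho(\widetilde Y)$ is a plane in $W_3$ meeting $F$ in a quartic curve, which is excluded for general $F$. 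Without this machinery (or an equivalent analysis of extremal contractions with two-dimensional fibers) your sketch of $(iii)$ does not go through, and note that what is at stake is the smoothness of $X_{2g-2}$ itself and the identification of $\varphi$ as a blowup, not merely the smoothness of $\Sigma$.
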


\begin{proof}
 Since $F$ is a scheme-theoretical intersection of quartics, the linear system $|4\rho^\ast H - E|$ is base point free.  
 Hence, the divisor $-K_{\widetilde{\Bbb P^4}} = \rho^\ast H +4\rho^\ast H - E$ is ample, that is, $\widetilde{\Bbb P^4}$ is a Fano fourfold whose rank of ${\mathrm{Pic}}(\widetilde{\Bbb P^4})$ is two.  
 By the Cone Theorem, there exists a Mori contraction $\mu : \widetilde{\Bbb P^4} \dashrightarrow  X_{2g-2}^\prime$ different from $\rho$. 
Let  $D$ be  the proper transform  of the cubic threefold in $\Bbb P^4$ that passes through $F$. We can write $D \sim 3\rho^\ast H-kE$ for some $k > 0$. 
Since  $F$ has $\delta$ transverse double points,  we have 
$$0\le (4\rho^\ast H-E)^3\cdot D=-42(k-1)$$ 
by Lemma \ref{chow1}. Hence, $k=1$ and $(4\rho^\ast H-E)^3\cdot D=0$. 
This means that the divisor class of $4\rho^\ast H-E$ is not ample, and so it yields a supporting linear function of the extremal ray generated by the curves in the fibers of $\mu$. 
Moreover, we can write $4\rho^\ast H-E=\mu^\ast L$ , where $L$ is the ample generator of ${\mathrm{Pic}}(X_{2g-2}') \cong \Bbb Z$.  Then we have $(4\rho^\ast H-E)^4 = 
2g-2$, and $\dim \mu(\widetilde{\Bbb P^4}) = 4$.  
Thus, $\mu$ is birational, and its exceptional locus coincides with $D$. In particular, it is an irreducible divisor. Using the Riemann-Roch theorem and Kodaira vanishing theorem, we obtain the equality $\dim |4\rho^\ast H-E| = g+2$. 
This yields the diagram
$$\xymatrix{&\widetilde{\Bbb P^4}\ar[dr]^{\mu}\ar[dl]^{\rho}\ar[r]^{\varphi}&X_{2g-2}\ar[d]\\
\Bbb P^4\ar@{-->}[rr]^\Psi&&X_{2g-2}^\prime \subset\Bbb P^{g+2},}$$
where $\widetilde{\Bbb P^4} \to X_{2g-2}^\prime \subset\Bbb P^{g+2}$ is given by the linear system $|4\rho^\ast H-E|$, and $$\varphi_{|4\rho^\ast H-E|}:\xymatrix{\widetilde{\Bbb P^4} \ar[r]^{\varphi} & X_{2g-2}\ar[r]& X_{2g-2}^\prime \subset\Bbb P^{g+2}}$$
is the Stein factorization.
\begin{claim}
The variety $X_{2g-2}^\prime$  is a Mukai fourfold with at worst terminal Gorenstein singularities and rank of  ${\mathrm{Pic}} X_{2g-2}^\prime$ is $1$.
\end{claim}
\begin{proof}[Proof of the claim]
Since $\mu$ is a divisorial Mori contraction, $X_{2g-2}^\prime$ has at worst terminal singularities. It follows from ${\mathrm{rank}}\ {\mathrm{Pic}} (\widetilde{\Bbb P^4} )= 2$ that we have ${\mathrm{rank}}\ {\mathrm{Pic}} (X_{2g-2}^\prime) = 1$. Since
$$-K_{\widetilde{\Bbb P^4}} = 5\rho^\ast H  - E=2(4\rho^\ast H -E)-D,$$
we get that $-K_{X_{2g-2}^\prime}=2L$. Hence, $-K_{X_{2g-2}^\prime}$ is an ample Cartier divisor divisible by $2$ in ${\mathrm{Pic}} X_{2g-2}^\prime$. So $X_{2g-2}^\prime$ is a Mukai fourfold.
\end{proof}

The morphism $X_{2g-2}\to X_{2g-2}^\prime \subset \Bbb P^{g+2}$ is given by the linear system $|L| = | -\frac{1}{2} K_{X_{2g-2}}|$. Lemma \ref{Mukaig} implies that this is an isomorphism. In the remaining proof, we identify $X_{2g-2}$ with $X_{2g-2}^\prime $ and $\varphi_{|4\rho^\ast H-E|}$ with $\varphi$.
Hence, we get that $\varphi$ is birational, $\deg X_{2g-2}=2g-2$ and the morphism $\varphi$ contracts the divisor $D$ to an irreducible surface $\Sigma \subset X_{2g-2}$. 

Since ${\mathrm{rank}}\ {\mathrm{Pic}} (\widetilde{\Bbb P^4})= 2$, the exceptional locus of $\varphi$ coincides with $D$, and
$D$ is a prime divisor. Therefore, $\varphi$ has at most a finite number of $2$-dimensional fibers. By the main theorem in \cite{AnW98}, $\Sigma$ has at most isolated singularities. Moreover,  the surface $\Sigma$ as in Table \ref{S95} is normal. Indeed, if  the surface $\Sigma \subseteq \Bbb P^{g-3}$ is not normal, then $\Sigma$ is contained in a $(g-4)$-dimensional subspace and the singular locus of $\Sigma$ is $1$-dimensional, which contradicts the fact  that $\Sigma$ has at most isolated singularities. 

The remaining statements of the proposition follow from the claim below. 
\end{proof}

\begin{claim}
The morphism $\varphi : \widetilde{\Bbb P^4}\to X_{2g-2}$ is the blowup of the surface $\Sigma$, where both $\Sigma$ and $X_{2g-2}$ are smooth if the surface $F$ is general.

\end{claim}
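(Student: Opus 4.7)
The strategy is to apply the main theorem of Andreatta--Wi\'sniewski \cite{AnW98}: a divisorial extremal Mori contraction from a smooth projective fourfold whose positive-dimensional fibers are all one-dimensional is the blowup of a smooth codimension-two subvariety in a smooth fourfold. Since the earlier parts of the proposition establish that $\varphi$ is a divisorial extremal contraction from the smooth fourfold $\widetilde{\Bbb P^4}$ onto an irreducible surface $\Sigma$ with at most isolated singularities, the claim reduces to verifying that, for a general $F$, no fiber of $\varphi$ is two-dimensional.

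The two-dimensional fibers of $\varphi$ are in bijection with $2$-planes $\Pi \subset W_3$ meeting $F$ along a plane quartic curve $C_4$. Indeed, if $\Pi$ is such a plane, then the strict transform $\widetilde\Pi \subset D$ is contracted by $\varphi$: for every line $\ell \subset \Pi$ one computes
$$(4\rho^*H - E)\cdot \widetilde\ell \;=\; 4\cdot 1 \;-\; \ell\cdot C_4 \;=\; 4 - 4 \;=\; 0,$$
using that $\widetilde\ell \cdot E = \ell\cdot C_4 = 4$ by B\'ezout in $\Pi$, and since the lines in $\Pi$ sweep out $\widetilde\Pi$, the whole strict transform collapses under $|4\rho^*H-E|$. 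Conversely, any two-dimensional fiber of $\varphi$ arises this way, as asserted in part $iii)$.

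Therefore, the smoothness claim reduces to the following genericity statement: for each $g \in \{6,7,8,9\}$, a general surface $F$ of the type listed in Table \ref{S95} is not met by any $2$-plane of $W_3$ along a plane quartic curve. This is a dimension count in the relevant Hilbert scheme: the requirement that $F$ contain a plane quartic whose spanning $2$-plane lies in the unique cubic threefold $W_3 \supset F$ imposes non-trivial conditions on $F$, cutting out a proper closed subfamily of the irreducible family of surfaces of the type under consideration. This verification relies on the explicit description of the cubic threefolds $W_3$ and their planes carried out in Section \ref{geometryCubicThreefold}.

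Once absence of such planes is ensured for a general $F$, \cite{AnW98} yields that $\varphi$ is the blowup of the smooth surface $\Sigma$ in the smooth Mukai fourfold $X_{2g-2}$, as required. The principal obstacle is the genus-by-genus geometric verification that the locus of special $F$'s is properly contained in the family, which must be read off from the detailed analysis of the singular cubic threefolds $W_3$ developed in Section \ref{geometryCubicThreefold}.
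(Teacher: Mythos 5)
Your strategy is the same as the paper's: reduce the claim to the absence of two-dimensional fibers of $\varphi$, identify such fibers with planes of $W_3$ meeting $F$ along a plane quartic, and conclude by the genericity of $F$ together with the description of the planes on $W_3$ from Section \ref{geometryCubicThreefold}. (The paper attributes the final smoothness-and-blowup conclusion to Ando \cite{And85} rather than to \cite{AnW98}, but that is cosmetic.)

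There is, however, one genuine gap. The implication your argument actually needs is that \emph{every} two-dimensional fiber $\widetilde{Y}$ of $\varphi$ maps under $\rho$ to a plane meeting $F$ along a quartic curve. You prove only the converse (a plane meeting $F$ in a quartic is contracted, via the B\'ezout computation on lines), and for the needed direction you appeal to part $iii)$ of Proposition \ref{main10} --- but the paper derives part $iii)$ \emph{from} the present claim, so that citation is circular. The paper closes this step as follows: since $\Sigma$ is normal (established just before the claim), the main theorem of \cite{AnW98} forces $\widetilde{Y}\cong\Bbb P^2$, and \cite[Proposition 4.11]{AnW98} gives $\rho^\ast H|_{\widetilde{Y}}=\mathcal O_{\Bbb P^2}(1)$; combined with $(4\rho^\ast H-E)|_{\widetilde{Y}}\sim 0$ this yields $E|_{\widetilde{Y}}=\mathcal O_{\Bbb P^2}(4)$, so $\rho(\widetilde{Y})$ is a plane in $\rho(D)=W_3$ cutting $F$ in a quartic. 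Without this input you cannot exclude, say, a fiber on which $\rho^\ast H$ restricts to $\mathcal O_{\Bbb P^2}(2)$, whose image would be a Veronese surface rather than a plane, and the quartic condition would not arise. Your genericity step (no plane of the finitely many planes on $W_3$ meets a general $F$ in a quartic) is stated at the same level of detail as in the paper, which likewise reads it off from Section \ref{geometryCubicThreefold}.
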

\begin{proof}[Proof of the claim]
Assume that $\Sigma$ or $X_{2g-2}$ are singular, then by \cite[Theorem
2.3]{And85} the extremal $K_{\widetilde{\Bbb P^4}} $-negative contraction $\varphi : \widetilde{\Bbb P^4} \to X_{2g-2}$  would have
a $2$-dimensional fiber, say, $\widetilde{Y}\subset D\subset \widetilde{\Bbb P^4}$. Since $\Sigma$ is normal, by the main theorem of \cite{AnW98}, we have $\widetilde{Y}\cong\Bbb P^2$ and by \cite[Proposition 4.11]{AnW98}, we get $\rho^\ast H|_{\widetilde{Y}}=\mathcal O_{\Bbb P^2}(1)$.
Since $\widetilde{Y}$ is contracted to a point under $\varphi$, we have $(4\rho^\ast H-E)|_{\widetilde{Y}}\sim 0$.
Thus, 
we have $E|_{\widetilde{Y}} =\mathcal O_{\Bbb P^2}(4)$. Therefore, the image $Y=\rho (\widetilde{Y})\subset \rho(D)$  is 
a plane meeting $F$ along a quartic curve, that is, $Y\neq F$ and $Y\cap F\cong \widetilde{Y}\cap E$ is a quartic curve in $Y\cong \Bbb P^2$.  This contradicts that a general surface $F$ is contained in the unique cubic threefold $W_3$ and $W_3$ only contains finitely many planes where non of these planes intersect $F$ in a quartic curve.
Hence, all the fibers of $\varphi$ have dimension less than or equal to $1$. By \cite{And85}, both $X_{2g-2}$ and $\Sigma$ are smooth, and $\varphi$ is the blowup of $\Sigma$. 
\end{proof}

\begin{rem}
The birational maps in Proposition \ref{pro4} and \ref{main10} for the case $g=6$ are also described in \cite[Section 7.3]{DIM15}. They show the relation of Gushel-Mukai fourfolds containing a $\tau$-quadric surface and their associated $K3$ surfaces of genus $6$.  On the other hand, the birational maps in Proposition \ref{pro4} and \ref{main10} for the case $g=8$ are also described in \cite{AlS16}. 

\end{rem}


\section{The geometry of the cubic threefolds $W_3$}\label{geometryCubicThreefold}

We use the notation of the last section. In particular, let $F\subset \Bbb P^4$ be a surface as in Table \ref{S95} and let $W_3$ be the unique cubic threefold containing $F$. We describe the geometry of the cubic threefold $W_3$ appearing in a Sarkisov link for the four different families of Mukai fourfolds as in Theorem \ref{Main}.

\subsection{Case $g=6$}
In the proof of Proposition \ref{main10}, the geometrical construction of a smooth Gushel-Mukai fourfold of genus $6$ containing a $\tau$-quadric surface is achieved as follows. Let $Y \subset \Bbb P^6$ be a general K3 surface of degree $10$. The surface $F \subset\Bbb P^4$ is the image of $Y$ under the internal projection from two points on $Y$. Then $F$ is a singular surface  of degree $8$ and sectional genus $6$ with one ordinary double point. Moreover, $F\subset\Bbb P^4$ lies on {\it a unique cubic threefold $W_3$ which has seven ordinary double points}. 
By Proposition \ref{main10},  there is a smooth Gushel-Mukai fourfold of genus $6$ containing a $\tau$-quadric surface and a birational map $\Psi : {\Bbb P^4} \dashrightarrow  X_{10}$ defined by the linear system of quartics containing $F$. The general Gushel-Mukai fourfold containing a $\tau$-quadric is obtained in this way by Proposition \ref{pro4} (see also \cite[Proposition 7.4]{DIM15}).

\subsection{Case $g=7$:}

We present a geometrical construction of a smooth Mukai fourfold of genus $7$ containing a {\it smooth} cubic scroll surface following Proposition \ref{main10}.  

Let $Y\subset \Bbb P^5$ be a smooth surface given as a blowup of $\Bbb P^2$ in $9$ points in general position by the complete  linear system 
$$H=4L-\sum\limits_{i=1}^{9}E_i,$$
and let $p\in \Bbb P^5\backslash Y$ be a general point not on the surface $Y$. We denote by $F \subset\Bbb P^4$ the singular projection of $Y$ from the point $p$. Moreover, the surface $F$ has degree $7$ and sectional genus $3$, has exactly three ordinary double points and lies on {\it a unique cubic threefold $W_3$ which has seven ordinary double points}. Note that the variety $W_3$ contains either $2$ or $3$ planes (see \cite{FiW89}).

We can reverse this construction. Let $F$ be a surface  of degree $7$ and sectional genus $3$ in $\Bbb P^4$ that is contained in unique cubic threefold $W_3$ with $7$ isolated singularities. 
From the classification of nodal cubic threefolds in \cite{FiW89},  there exists a quadric surface cone $C\subset W_3$ whose vertex is one of singular points of $W_3$.
Thus, the linear system $|2H - C |$ of quadric hypersurfaces in $\Bbb P^4$ passing through $C$ defines a birational map 
$$\Bbb P^4 \dashrightarrow  Q\subset \Bbb P^5,$$
where the image $Q$ is a quadric hypersurface in $\Bbb P^5$. Then, the image of $F$ via this birational map is a smooth surface $Y$ of degree $7$ and sectional $3$ in $\Bbb P^5$. All projective, algebraic varieties of degree $7$ were classified by Ionescu (see \cite{Ion82}) and it follows that $Y$ is a blowup of $\Bbb P^2$ in $9$ points in general position by the complete  linear system $H=4L-\sum_{i=1}^{9}E_i$.

The birational isomorphism $\Psi : {\Bbb P^4} \dashrightarrow  X_{12}$ of Proposition \ref{main10} is given by the linear system of quartics containing $F$. The general Mukai fourfold of genus $7$ containing a cubic scroll surface is obtained in this way by Proposition \ref{pro4}. Note the corresponding Hilbert scheme of smooth surfaces  of degree $7$ and sectional genus $3$ is smooth at the point representing $Y$ of dimension $45$. Thus, this construction depends on $14$ parameters ($9$ for the surface $Y$ and $5$ for $p \in \Bbb P^5$).
The moduli space of the Mukai fourfolds of genus $7$ is of dimension $15$, and we conclude that the family of all Fano fourfolds of genus $7$ obtained by our construction has codimension $1$. 


\subsection{Case $g=8$:} The geometrical construction of Proposition \ref{main10} of a smooth Mukai fourfold of genus $8$ is realized as follows. It follows from the classification of surfaces of degree $7$ in $\Bbb P^4$ (see \cite{Ion82} or \cite{Liv90}) that the smooth surface $F\subset \Bbb P^4$ of  degree $7$ and  sectional genus $4$  is a blowup of $\Bbb P^2$ in $11$ points in general position embedded by the complete linear system
$$H=6L-\sum\limits_{i=1}^{6}2E_i-\sum\limits_{i=7}^{11}E_i.$$
Therefore, the birational isomorphism $\Psi : {\Bbb P^4} \dashrightarrow  X_{14}$ is given by the linear system of quartics containing $F$. The general Mukai fourfold of genus $8$ is obtained in this way by Proposition \ref{pro4}.
Moreover, the surface $F \subset\Bbb P^4$ lies on a unique cubic threefold $W_3$ with the maximal number of $10$ ordinary double points. Such a cubic threefold is unique up to isomorphism. It is called the {\it Segre cubic} and can be explicitly given by the following system of equations:
$$\sum\limits_{i=0}^5x_i=\sum\limits_{i=0}^5x_i^3=0,$$
in $\Bbb P^5$. This variety has many interesting properties (see e.g. \cite[Section 9.4.4]{Dol12}):
\begin{enumerate}[$\bullet$]
\item It is a unique cubic threefold with $10$ isolated singularities.
\item The variety $W_3$ contains exactly $15$ planes forming an ${\mathrm{Aut}}(W_3)$-orbit and one of them
is given by equations
$$x_1+x_2=x_3+x_4=x_5+x_6=0.$$
\item Singular points of $W_3$ form an ${\mathrm{Aut}}(W_3)$-orbit, the point $(1:1:1:-1:-1:-1)$ is one
of them.
\end{enumerate}

\subsection{Case $g=9$:} The  geometrical construction of a smooth Mukai fourfold of genus $9$ containing a sextic del Pezzo surface starts from a general sextic del Pezzo surface $Y \subset \Bbb P^6$ and two general points not on $Y$.  The surface $F \subset\Bbb P^4$ is the (singular) projection of $Y$ from these two points (see \cite[Section 8.4.2]{Dol12}) and the birational isomorphism $\phi : {\Bbb P^4} \dashrightarrow  X_{16}$ is given by the linear system of quartics containing $F$.
Moreover, $F \subset\Bbb P^4$ lies on a \emph{unique cubic threefold $W_3$  with $9$ ordinary double points} and the cubic $W_3$ is a hyperplane section of the cubic fourfold $Z \subset \Bbb P^5$ which can be explicitly given by the following equation: $x_1x_2x_3=y_1y_2y_3$
(see \cite[Proposition 2.2]{ShB12}). The singular locus ${\mathrm{Sing}}(Z)$ consists of nine lines
$$\ell_{ij}=\{x_k=y_l=0\mid k\neq i, l\neq j\},$$
which intersects $W_3$ in the singular locus of $W_3$. Also $Z$ contains nine three-spaces
$M_{ij}=\{x_i=y_j=0\}$
and the intersections of these subspaces with $W_3$ are planes on $W_3$.
The general Mukai fourfold of genus $9$ containing a sextic del Pezzo surface is obtained in this way by Proposition \ref{pro4}. Notice that if $X_{16}$ is a general Mukai fourfold of genus $9$ containing a sextic del Pezzo surface, then $X_{16}$ contains  a quadric $S\subset \Bbb P^3\subset\Bbb P^{11}$ with $c_2(X_{16}) \cdot S = 5$, and this family is already described in \cite[Theorem 2.1]{PrZ16}.


\section{Cylinders in the complement of singular cubic threefolds}\label{CylinderCubics}

\begin{prop}\label{cylinderSC}
Let $W$ be any cubic hypersurface in $\Bbb P^4$ whose singular locus contains  a line $\ell$. 
Then $\Bbb P^4\backslash W$ contains a cylinder. 
\end{prop}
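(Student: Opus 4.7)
The plan is to exploit the projection from $\ell$ and reduce, via an explicit coordinate change, to a linear equation in one variable whose complement is manifestly a cylinder. First I would choose homogeneous coordinates $[x_0:x_1:x_2:x_3:x_4]$ on $\Bbb P^4$ so that $\ell = V(x_0,x_1,x_2)$. Since $W$ is singular along $\ell$, its defining cubic lies in $I_\ell^2 = (x_0,x_1,x_2)^2$, and a monomial count forces it into the form
\[
W \;=\; F_3(x_0,x_1,x_2) + x_3\,Q_1(x_0,x_1,x_2) + x_4\,Q_2(x_0,x_1,x_2),
\]
where $F_3$ is a cubic form and $Q_1,Q_2$ are quadratic forms in $x_0,x_1,x_2$.

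In the main case, at least one of $Q_1,Q_2$ is nonzero; up to relabeling, assume $Q_2 \not\equiv 0$. I pass to the affine chart $\{x_0 \ne 0\} \cong \Bbb A^4$ with coordinates $y_i = x_i/x_0$ and write $c, q_1, q_2$ for the dehomogenizations of $F_3, Q_1, Q_2$. Further localizing at $Q_2$, set
\[
U \;:=\; \{x_0 \ne 0\} \cap \{Q_2 \ne 0\} \;\subset\; \Bbb P^4,
\]
a principal Zariski open subset. On $U$ the equation $c + y_3 q_1 + y_4 q_2 = 0$ of $W \cap U$ is linear in $y_4$ with the invertible leading coefficient $q_2$, so the regular function $t := y_4 + (c + y_3 q_1)/q_2$ together with $y_1, y_2, y_3$ gives coordinates identifying $U$ with $B \times \Bbb A^1_{y_3} \times \Bbb A^1_t$, where $B := \{(y_1,y_2) \in \Bbb A^2 : q_2(y_1,y_2) \ne 0\}$. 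In these coordinates $W \cap U = \{t = 0\}$, so
\[
U \setminus W \;\cong\; B \times \Bbb A^1_{y_3} \times \Bbb G_{m,t},
\]
which is a cylinder contained in $\Bbb P^4 \setminus W$.

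For the remaining degenerate case $Q_1 \equiv Q_2 \equiv 0$, the cubic $W = F_3(x_0,x_1,x_2)$ is a cone with vertex $\ell$. Its restriction to $\{x_0 \ne 0\} \cong \Bbb A^4$ does not involve $y_3, y_4$, so
\[
\{x_0 \ne 0\} \setminus W \;\cong\; \{(y_1,y_2) \in \Bbb A^2 : F_3(1,y_1,y_2) \ne 0\} \times \Bbb A^2_{y_3,y_4},
\]
which is again manifestly a cylinder.

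No step is genuinely difficult; the key observation is that the singularity condition $W \in I_\ell^2$ makes the defining cubic \emph{linear} in $y_4$ (or $y_3$) with a quadratic leading coefficient, so that on a suitable affine chart $W$ is a section of a trivial $\Bbb A^1$-bundle over an open subset of $\Bbb A^3$. The only mild technical point is the auxiliary localization by $Q_2$ in the main case, which is what makes the coordinate change $y_4 \mapsto t$ regular; it only removes an additional hypersurface, so $U \setminus W$ remains principal Zariski open in $\Bbb P^4 \setminus W$ as required.
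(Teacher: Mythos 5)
Your proof is correct, and it takes a genuinely more elementary route than the paper's. Both arguments rest on the same geometric fact: since $W$ lies in $I_\ell^2$, every plane $\Pi$ containing $\ell$ meets $W$ in $2\ell$ plus a residual line, so the complement fibers in copies of $\Bbb A^1\times\Bbb A^1_\ast$ over the plane parametrizing the $\Pi$'s. The paper realizes this by blowing up $\ell$, exhibiting the projection as a $\Bbb P^2$-bundle $\rho:\widetilde{\Bbb P^4}\to\Bbb P^2$, producing an $\Bbb A^1$-fibration via the map $\sigma=(\rho,h)$, and then invoking the theorem of Kambayashi--Miyanishi (and Kambayashi--Wright) to extract a cylinder compatible with that fibration over some dense open subset. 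You bypass the blowup and the appeal to \cite{KaM78, KaW85} entirely: the normal form $F_3+x_3Q_1+x_4Q_2$ with $F_3,Q_1,Q_2$ forms in the three coordinates cutting out $\ell$ (which is exactly what $f\in I_\ell^2$ gives for a cubic) is linear in $x_3,x_4$, so after inverting the leading coefficient $Q_2$ a triangular shear $y_4\mapsto t=y_4+(c+y_3q_1)/q_2$ trivializes the situation and displays the cylinder explicitly as $B\times\Bbb A^1_{y_3}\times\Bbb G_{m,t}$; the cone case $Q_1=Q_2=0$ is even easier. What your approach buys is an explicit, self-contained cylinder (and its complement in $\Bbb P^4$ is visibly the support of a divisor, namely $W$ plus a hyperplane plus a quadric, so principality is immediate); what the paper's approach buys is robustness, since the fibration argument does not require an explicit global trivialization and matches the template of \cite[Proposition 3.7]{KPZ14}. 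Your case split is exhaustive and the nonemptiness of $B$ (equivalently $Q_2\not\equiv 0$ after relabeling) is handled correctly, so there is no gap.
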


We remark that our proof is similar to \cite[Proposition 3.7 and Lemma 3.10]{KPZ14}. The authors show that the complement of a cubic surface in $\Bbb P^3$ contains a cylinder whenever the surface has a singularity worse than an $A_2$-singularity.  Even more is true.  Note that the complement of a cubic surface with at worst du Val singularities in $\Bbb P^3$ is cylindrical if and only if it is singular (see \cite{CPW16a,Par20}).

\begin{proof}
Let $P$ and $V$ be two vector spaces of dimension $2$ and $5$, respectively, such that  $\ell = \Bbb P(P) \subset \Bbb P(V) = \Bbb P^4$. Furthermore, we pick a generic linear subspace $\Bbb P^2=\Bbb P(V^\prime)\subset \Bbb P(V)$ of codimension $2$ such that the composition $V^\prime \subset V \twoheadrightarrow V/P$ is an isomorphism or, equivalently, that $V^\prime \oplus P = V$.

The linear projection $\Phi:\Bbb P^4 \dashrightarrow \Bbb P^2:=\Bbb P(V^\prime) \cong \Bbb P(V/P)$ from $\ell$ is the rational map sending a point 
$x \in\Bbb P^4 \backslash \ell$ to the unique point of intersection of the linear subspace $\langle x, \ell \rangle \cong \Bbb P^2$ with $\Bbb P(V^\prime) \cong \Bbb P^2$. It is the rational map associated with the linear system $|\mathcal I_\ell (1)| \subset |\mathcal O_{\Bbb P^4}(1)|$ with base locus $\ell \subset \Bbb P^4$. Resolving this map by a simple blowup, we get a  morphism $\rho: \widetilde{\Bbb P^4} \to \Bbb P(V^\prime)$  associated to the complete linear system $|\varphi^\ast \mathcal O(1) \otimes \mathcal O(-E)|$ in the following diagram
 $$\xymatrix{&E=\Bbb P(\mathcal N_{\ell})\ar[dl]\ar@{^{(}->}[r]&\widetilde{\Bbb P^4}\ar[dl]_{\varphi}\ar[dr]^{\rho}&& 
\\
\ell\ \ar@{^{(}->}[r]&\Bbb P^4\ar@{-->}[rr]^{\Phi}&&\Bbb P^2,&}$$
where $\varphi$ is the blowup of $\ell$ and $E$ the exceptional divisor. 
For a point $\lambda \in \Bbb P(V^\prime)$, the fibre $\rho^{-1}(\lambda) \subset \widetilde{\Bbb P^4}$ is the strict transform of $\Pi_\lambda:= \langle \lambda, \ell \rangle  \subset \Bbb P^4$, and is isomorphic to $\Bbb P^2$. Therefore, $\rho: \widetilde{\Bbb P^4} \to \Bbb P(V^\prime)$ is a $\Bbb P^2$-bundle. 

Let $W \subset \Bbb P^4$ be the cubic hypersurface with equation $f \in H^0(\Bbb P^4,\mathcal O(3) \otimes \mathcal I_\ell^2) \subset H^0(\Bbb P^4,\mathcal O(3))$ 
since $\ell$ is contained in the singular locus of $W$. The total transform is the union of the exceptional divisor $E$ and its strict transform $\widetilde W$. 
Moreover, for all $\lambda \in \Bbb P^2$, we have
$$ W\cap  \Pi_\lambda = 2\ell \cup L_\lambda,$$ 
where  $L_\lambda$ is a line. 

Let $\Pi_\infty$ be a hyperplane containing $\ell$ in $\Bbb P^4$, and let $\widetilde {\Pi_\infty}$ be the proper transform  of $\Pi_\infty$ in $\widetilde{\Bbb P^4}$. We consider the open set
$$ \widetilde{U}=\widetilde{\Bbb P^4}\backslash (E \cup \widetilde {\Pi_\infty})\cong \Bbb P^4\backslash \Pi_\infty\cong\Bbb A^4 \subset \widetilde{\Bbb P^4}.$$

Let $h$ be a regular function on $\widetilde{U}$ which defines the affine threefold $\widetilde{U}\cap \widetilde W$. Now, we consider the rational map $\sigma$
 $$\xymatrix{\widetilde{\Bbb P^4} \ar@{-->}[rr]^{\sigma=(\rho,h)}\ar[rd]_{\rho}&&\Bbb P^2\times \Bbb P^1\ar[dl]^{\pr}&\\
 &\Bbb P^2}$$
Its restriction to the open set $\widetilde{U}\backslash (\widetilde{U} \cap \widetilde W)$ is regular, while the restriction to 
a general fiber 
$$\rho^{-1}(\lambda) \backslash (\widetilde W\cup  \widetilde E)\cong \Pi_\lambda \backslash (\ell \cup L_\lambda)\cong \Bbb A^1\times \Bbb A^1_\ast$$ 
of $\rho|_{\widetilde U}$ defines an $\Bbb A^1$-fibration.
Therefore, the rational map $\sigma$ defines  an $\Bbb A^1$-fibration over a Zariski open subset  of $\Bbb P^2 \times  \Bbb P^1$. 

By \cite[Theorem 1]{KaM78} or \cite{KaW85}, there exists a cylinder in $\widetilde{\Bbb P^4}\backslash (\widetilde W\cup \widetilde \Pi_\infty \cup E)$ compatible with this  $\Bbb A^1$-fibration of $\sigma$. Hence, $\Bbb P^4\backslash W$ contains a cylinder, as required. 
\end{proof}

\begin{prop}\label{cylinderCremona}
Let $W$ be a singular cubic hypersurface in $\Bbb P^4$ and let $p$ be an ordinary double point of $W$. 
Then $\Bbb P^4\backslash W$ contains a cylinder. 
\end{prop}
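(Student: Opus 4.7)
The plan is to mimic the argument of Proposition~\ref{cylinderSC}, using a suitably chosen $2$-plane in the projectivised tangent cone of $W$ at $p$ in place of the singular line $\ell$. First I would choose homogeneous coordinates $[x_0\colon\cdots\colon x_4]$ on $\Bbb P^4$ so that $p=[0\colon 0\colon 0\colon 0\colon 1]$ and the equation of $W$ reads
\begin{equation*}
x_4\,Q(x_0,x_1,x_2,x_3)+C(x_0,x_1,x_2,x_3)=0,
\end{equation*}
where $Q$ is a non-degenerate quadratic form (since $p$ is an ordinary double point) and $C$ is a cubic form. After a further linear change of variables fixing $p$, I may assume $Q=x_0 x_3+x_1 x_2$, so that the smooth quadric surface $\{Q=0\}\subset\Bbb P^3$ acquires two explicit rulings. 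Setting $\Pi_0:=\{x_0=x_2=0\}\subset\Bbb P^4$ gives a $2$-plane contained in the quadric cone $\{Q=0\}\subset\Bbb P^4$, and a short computation shows that $\Pi_0\cap W$ is cut out by $C|_{\Pi_0}$, so it decomposes as three concurrent lines $L_1\cup L_2\cup L_3$ through $p$. This plays the role that the decomposition $\Pi_\lambda\cap W=2\ell\cup L_\lambda$ played in Proposition~\ref{cylinderSC}.

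Next, I would consider the projection $\phi=[x_0\colon x_2]\colon\Bbb P^4\dashrightarrow\Bbb P^1$ from $\Pi_0$, fix the hyperplane $\Pi_\infty=\{x_0=0\}\supset\Pi_0$, and work on the affine chart $\widetilde U:=\Bbb P^4\setminus\Pi_\infty\cong\Bbb A^4$ with coordinates $(y_1,y_2,y_3,y_4)=(x_1/x_0,x_2/x_0,x_3/x_0,x_4/x_0)$. In these coordinates the equation of $W\cap\widetilde U$ becomes
\begin{equation*}
h(y)=y_4\,\tilde Q(y_1,y_2,y_3)+\tilde C(y_1,y_2,y_3),\qquad \tilde Q=y_3+y_1 y_2,
\end{equation*}
and $\phi=y_2$. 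The essential point, forced by $\Pi_0\subset\{Q=0\}$, is that $h$ is \emph{linear} in $y_4$ on every fibre of $\phi$---exactly as the equation of $W$ was linear in the fibre direction in the proof of Proposition~\ref{cylinderSC}.

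Finally, I would form the three-component map $\sigma:=(\phi,h,\tilde Q)\colon\widetilde U\to\Bbb A^3$. Over a triple $(\lambda,t,s)\in\Bbb A^3$ with $s\neq 0$ and $t\neq 0$, the equations $y_2=\lambda$, $y_3+\lambda y_1=s$, and $y_4 s+\tilde C(y_1,\lambda,s-\lambda y_1)=t$ determine $y_3$ and $y_4$ uniquely as polynomial functions of the free parameter $y_1\in\Bbb A^1$, so the $\sigma$-fibre is isomorphic to $\Bbb A^1$. This exhibits $\sigma|_{\widetilde U\setminus\widetilde W}$ as an $\Bbb A^1$-fibration over a Zariski open subset of $\Bbb A^3\subset\Bbb P^1\times\Bbb P^1\times\Bbb P^1$, and appealing to \cite[Theorem~1]{KaM78} or \cite{KaW85}, exactly as at the end of the proof of Proposition~\ref{cylinderSC}, produces a cylinder in $\widetilde U\setminus\widetilde W$ and hence in $\Bbb P^4\setminus W$.

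The main obstacle is identifying the correct third component of $\sigma$. Here $\tilde Q$ is dictated by the geometry: its zero locus cuts each fibre of $\phi$ along the specialisation of the other ruling of the tangent cone, so that its level sets are the translates of a $2$-plane, and intersecting with the level sets of the $y_4$-linear form $h$ carves out exactly one $\Bbb A^1$ per generic point of the base. Once this choice is made, verifying that $\sigma$ satisfies the Kambayashi--Miyanishi / Kaliman--Wright hypotheses runs exactly parallel to the last part of the proof of Proposition~\ref{cylinderSC}.
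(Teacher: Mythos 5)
Your proof is correct, but it takes a genuinely different route from the paper's. The paper does not redo the projection argument of Proposition~\ref{cylinderSC}; instead it picks a rank-$4$ quadric $Q=V(g)$ singular at $p$ and applies the cubic Cremona transformation $|g\cdot l_1,\dots,g\cdot l_4,f|$, under which $W$ maps to a hyperplane $P$ and the total transform of $p$ is a quadric cone $Q'$, giving $\Bbb P^4\backslash (W\cup Q)\cong\Bbb P^4\backslash (P\cup Q')$; since the reducible cubic $P\cup Q'$ is singular along the lines of the two rulings of the smooth quadric surface $P\cap Q'$, Proposition~\ref{cylinderSC} then applies verbatim. You instead rebuild the fibration from scratch, replacing the singular line $\ell$ by a $2$-plane $\Pi_0$ inside the rank-$4$ tangent cone at $p$, projecting to $\Bbb P^1$ rather than $\Bbb P^2$, and compensating for the larger fibre dimension by adjoining $\tilde Q$ as a third component of $\sigma$. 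Your computation checks out, and is in fact stronger than you claim: on $\{\tilde Q\neq 0\}$ the map $(y_1,y_2,y_3,y_4)\mapsto (y_1,y_2,\tilde Q,h)$ is an isomorphism onto $\Bbb A^1\times\Bbb A^1\times\Bbb A^1_\ast\times\Bbb A^1$, so over $\{s\neq 0\}$ your $\sigma$ is literally a trivial $\Bbb A^1$-bundle and the appeal to \cite{KaM78} is not even needed --- the open set $\{x_0\neq 0\}\cap\{Q\neq 0\}\cap\{f\neq 0\}\cong \Bbb A^1\times(\Bbb A^1\times\Bbb A^1_\ast\times\Bbb A^1_\ast)$ is already the desired cylinder. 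The only blemish is the assertion that $\Pi_0\cap W$ always consists of three concurrent lines: this fails when $C|_{\Pi_0}\equiv 0$, i.e.\ when $\Pi_0\subset W$, but that remark is purely motivational and the fibration argument is unaffected. In short, the paper's route buys economy by reducing to an already-proven case via a Cremona transformation, while yours buys a self-contained, completely explicit coordinate description of the cylinder.
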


\begin{proof}
Let $W = V(f)$ be generated by a cubic equation $f$, and let $\mathcal{I}_p(1) = (l_1,l_2,l_3,l_4)$ be the linear forms generating the ideal of $p$. We choose a rank $4$ quadric $Q=V(g)$ that is singular in $p$ (that is, $g\in |\mathcal{I}^2_p(2)|$). Then, we consider the following cubic Cremona transformation 
$$
\xymatrix{
\Bbb P^4 \ar@{-->}[rrr]^{|g \cdot l_1, g \cdot l_2, g \cdot l_3, g \cdot l_4, f|} &&& \Bbb P^4.}
$$
The image of $W$ is a hyperplane $P\subset \Bbb P^4$ not containing the image point of $Q$. The total transform of the point $p$ is another rank $4$ quadric $Q'$ which is a cone and its vertex is the image of $Q$. We get 
$$
\Bbb P^4\backslash (W\cup Q) \cong \Bbb P^4\backslash (P\cup Q').
$$
Note that the intersection $P\cap Q'$ is a smooth rank $4$ quadric surface, whence it contains two rulings of lines. Therefore, we may apply Proposition \ref{cylinderSC}, and $\Bbb P^4\backslash(P\cup Q')$ contains a cylinder. Thus, $\Bbb P^4\backslash W$ contains a cylinder, too.

\end{proof}

We end this section by proving our main theorem. 

\begin{proof}[Proof of Theorem \ref{Main}]
Let $X_{2g-2}$ be a Mukai fourfold in one of the families in Theorem \ref{Main}. 
By Proposition \ref{pro4} and \ref{main10}, there exists a cubic threefold $W_3\subset \Bbb P^4$ and a hypersurface in $\varphi(E)\subset X_{2g-2}$ such that 
$$
X_{2g-2}\backslash \varphi(E)\cong \Bbb P^4\backslash W_3.
$$
As shown in Section \ref{geometryCubicThreefold}, the cubic $W_3$ has at least one isolated ordinary double point. By Proposition \ref{cylinderSC} and \ref{cylinderCremona}, it follows that $ \Bbb P^4\backslash W_3$ contains a cylinder. This finishes the proof of Theorem \ref{Main}.
\end{proof}

{\bf Acknowledgement} The authors wish to thank Frank-Olaf Schreyer for his useful suggestions about the topics in this paper. We thank Mikhail Zaidenberg for valuable comments and suggestions on an earlier draft.


\begin{bibdiv}
\begin{biblist}

\bib{And85}{article}{
      author={Ando, T.},
       title={On extremal rays of the higher-dimensional varieties},
        date={1985},
     journal={Invent. Math.},
      volume={81},
      number={2},
       pages={347\ndash 357},
}

\bib{AlS16}{article}{
      author={Alzati, Alberto},
      author={Sierra, José~Carlos},
       title={Special birational transformations of projective spaces},
        date={2016},
        ISSN={0001-8708},
     journal={Advances in Mathematics},
      volume={289},
       pages={567 \ndash  602},
  url={http://www.sciencedirect.com/science/article/pii/S0001870815004788},
}

\bib{AnW98}{article}{
      author={Andreatta, M.},
      author={Wi\'{s}niewski, J.A.},
       title={On contractions of smooth varieties},
        date={1998},
     journal={J. Algebraic Geom.},
      volume={7},
      number={2},
       pages={253\ndash 312},
}

\bib{BoH58}{article}{
      author={Borel, A.},
      author={Hirzebruch, F.},
       title={{Characteristic Classes and Homogeneous Spaces, I.}},
        date={1958},
     journal={Amer. J. of Math.},
      volume={80},
       pages={458\ndash 538},
}

\bib{CPPZ20}{unpublished}{
      author={Cheltsov, Ivan},
      author={Park, Jihun},
      author={Prokhorov, Yuri},
      author={Zaidenberg, Mikhail},
       title={Cylinders in {F}ano varieties},
        date={2020},
        note={preprint: \url{https://arxiv.org/abs/2007.14207}},
}

\bib{CPW16a}{article}{
 title={Cylinders in singular del Pezzo surfaces}, volume={152}, 
 number={6}, journal={Compositio Mathematica}, publisher={London Mathematical Society},       author={Cheltsov, I.},
      author={Park, J.},
      author={Won, J.},
, year={2016}, pages={1198–1224}}

\bib{CPW16}{article}{
      author={Cheltsov, I.},
      author={Park, J.},
      author={Won, J.},
       title={Affine cones over smooth cubic surfaces},
        date={2016},
     journal={J. Eur. Math. Soc.},
      volume={18},
       pages={1537\ndash 1564},
}

\bib{DIM15}{article}{
      author={Debarre, O.},
      author={Iliev, A.},
      author={Manivel, L.},
       title={Special prime {F}ano fourfolds of degree {$10$} and index {$2$}},
        date={2015},
     journal={Recent Advances in Algebraic Geometry: A Volume in Honor of Rob
  Lazarsfeld's 60th Birthday (C. Hacon, M. Mustata, and M. Popa, eds.), London
  Math. Soc. Lecture Note Ser., Cambridge Univ. Press},
       pages={123\ndash 155},
}

\bib{Dol12}{book}{
      author={Dolgachev, I.V.},
       title={Classical algebraic geometry},
   publisher={Cambridge University Press},
     address={Cambridge},
        date={2012},
}

\bib{FiW89}{article}{
      author={Finkelnberg, H.},
      author={Werner, J.},
       title={Small resolutions of nodal cubic threefold},
        date={1989},
     journal={Nederl. Akad. Wetensch. Indag. Math.},
      volume={51},
      number={2},
       pages={185\ndash 198},
}

\bib{Gus82}{article}{
      author={Gushel, N.~P.},
       title={Fano varieties of genus 6 (in russian),},
        date={1982},
     journal={Izv. Akad. Nauk USSR Ser. Mat.},
      volume={46},
      number={6},
       pages={1159\ndash 1174, English transl.: Math. USSR\ndash Izv. 21 3
  (1983), 445\ndash 459},
}

\bib{Has00}{article}{
      author={Hassett, B.},
       title={Special cubic fourfolds},
        date={2000},
     journal={Compos. Math.},
      volume={120},
      number={1},
       pages={1\ndash 23},
}

\bib{HaL18}{article}{
      author={Hassett, Brendan},
      author={Lai, Kuan-Wen},
       title={Cremona transformations and derived equivalences of {K}3
  surfaces},
        date={2018},
     journal={Compositio Mathematica},
      volume={154},
      number={7},
       pages={1508–1533},
}

\bib{HoS20}{article}{
      author={Hoff, M.},
      author={Staglian\'{o}, G.},
       title={New examples of rational {G}ushel-{M}ukai fourfolds},
        date={2020},
     journal={Math. Z.},
}

\bib{Ion82}{article}{
      author={Ionescu, P.},
       title={Embedded projective varieties of small invariants},
        date={1982},
     journal={Algebraic geometry, Bucharest 1982 (Bucharest, 1982), Lecture
  Notes in Math., 1056, Springer, Berlin, 1984.},
       pages={142\ndash 186},
}

\bib{IsP99}{book}{
      author={Iskovskikh, V.~A.},
      author={Prokhorov, Yu.},
       title={Fano varieties. {Algebraic geometry V.}},
   publisher={Encyclopaedia Math. Sci. Springer},
     address={Berlin},
        date={1999},
      volume={47},
}

\bib{Isk77}{article}{
      author={Iskovskih, V.A.},
       title={Fano {$3$}-folds i},
        date={1977},
     journal={Izv. Akad. Nauk SSSR Ser. Mat.},
      volume={41},
       pages={516\ndash 562},
}

\bib{KaM78}{article}{
      author={Kambayashi, T.},
      author={Miyanishi, M.},
       title={On flat fibrations by the affine line},
        date={1978},
     journal={Illinois J. Math.},
      volume={22},
       pages={662\ndash 671},
}

\bib{KPZ11}{book}{
      author={Kishimoto, T.},
      author={Prokhorov, Yu.},
      author={Zaidenberg, M.},
      editor={Daigle, D.},
      editor={Ganong, R.},
      editor={Koras, M.},
       title={Group actions on affine cones},
      series={Affine Algebraic Geometry CRM Proceedings and Lecture Notes},
   publisher={American Mathematical Society},
     address={Providence},
        date={2011},
      volume={54},
}

\bib{KPZ14}{article}{
      author={Kishimoto, T.},
      author={Prokhorov, Yu.},
      author={Zaidenberg, M.},
       title={Affine cones over fano threefolds and additive group actions},
        date={2014},
     journal={Osaka J. Math.},
      volume={51},
      number={4},
       pages={1093\ndash 1112},
}

\bib{KaW85}{article}{
      author={Kambayashi, T.},
      author={Wright, D.},
       title={Flat families of affine lines are affine-line bundles},
        date={1985},
     journal={Illinois J. Math.},
      volume={29},
       pages={672\ndash 681},
}

\bib{Liv90}{article}{
      author={Livorni, E.L.},
       title={On the existence of some surfaces},
        date={1990},
     journal={Lecture Notes Math. 1417 Springer, NewYork},
}

\bib{Muk89}{article}{
      author={Mukai, S.},
       title={Biregular classi cation of {F}ano {$3$}-folds and {F}ano
  manifolds of coindex {$3$}},
        date={1989},
     journal={Proc.Nat.Acad. Sci. U.S.A.},
      volume={86},
      number={9},
       pages={3000\ndash 3002},
}

\bib{Par20}{article}{
	abstract = {Let S be a del Pezzo surface with at worst Du Val singularities such that it is a hypersurface in a weighted projective space ℙ. We prove that the surface S contains a (−KS)-polar cylinder if and only if the automorphism group of the affine variety ℙ{$\backslash$} S contains a unipotent subgroup.},
	author = {Park, J.},
	da = {2020/07/04},
	date-added = {2021-06-30 09:28:14 +0200},
	date-modified = {2021-06-30 09:28:37 +0200},
	doi = {10.1007/s00031-020-09589-x},
	id = {PARK2020},
	isbn = {1531-586X},
	journal = {Transformation Groups},
	title = {$\Bbb G_a$-Actions on the complements of hypersurfaces},
	ty = {JOUR},
	url = {https://doi.org/10.1007/s00031-020-09589-x},
	year = {2020},
	Bdsk-Url-1 = {https://doi.org/10.1007/s00031-020-09589-x}}

\bib{PCS05}{article}{
      author={Przhiyalkovski, V.~V.},
      author={Cheltsov, I.~A.},
      author={Shramov, K.~A.},
       title={{Hyperelliptic and trigonal Fano threefolds}},
        date={2005},
     journal={(Russian). Izv. Ross. Akad. Nauk Ser. Mat.},
      volume={69},
       pages={145\ndash 204. English translation in: Izv. Math. 69 (2005)
  365\ndash 421},
}

\bib{PrZ16}{article}{
      author={Prokhorov, Y.},
      author={Zaidenberg, M.},
       title={Examples of cylindrical {F}ano fourfolds},
        date={2016},
     journal={Eur. J. Math.},
      volume={2},
       pages={262\ndash 282},
}

\bib{PrZ17}{article}{
      author={Prokhorov, Y.},
      author={Zaidenberg, M.},
       title={New examples of cylindrical {F}ano fourfolds,},
        date={2017},
     journal={Adv. Stud. Pure Math.},
      volume={75},
       pages={443\ndash 463},
}

\bib{PrZ20}{unpublished}{
      author={Prokhorov, Y.},
      author={Zaidenberg, M.},
       title={Affine cones over {F}ano-{M}ukai fourfolds of genus 10 are
  flexible},
        date={2020},
        note={preprint: \url{https://arxiv.org/abs/2005.12092}},
}

\bib{ShB12}{unpublished}{
      author={Shepherd-Barron, N.I.},
       title={The {H}asse principle for {$9$}-nodal cubic threefolds},
        date={2012},
        note={preprint: \url{https://arxiv.org/abs/1210.5178}},
}

\bib{Xu11}{article}{
      author={Xu, F.},
       title={On the smooth linear section of the {Grassmannian}
  {${\mathrm{Gr}}(2,n)$}},
        date={2012},
     journal={PhD thesis, Rice Univeristy.},
}

\end{biblist}
\end{bibdiv}

\end{document}